\documentclass[pdflatex,sn-mathphys-num]{sn-jnl}


\usepackage{graphicx}%
\usepackage{multirow,hhline}%
\usepackage{amsmath,amssymb,amsfonts}%
\usepackage{amsthm}%
\usepackage{mathrsfs}%
\usepackage[title]{appendix}%
\usepackage{xcolor}%
\usepackage{textcomp}%
\usepackage{manyfoot}%
\usepackage{algorithm}%
\usepackage{algorithmicx}%
\usepackage{algpseudocode}%
\usepackage{listings}%


\raggedbottom

\allowdisplaybreaks[4]  %

\newtheorem{theorem}{Theorem}[section]
\newtheorem{theorem1}{Theorem}[section]
\newtheorem{Assumption}[theorem1]{Assumption}
\newtheorem{theorem2}{Theorem}[section]
\newtheorem{Lemma}[theorem2]{Lemma}
\newtheorem{theorem3}{Theorem}[section]
\newtheorem{Corollary}[theorem3]{Corollary}
\newtheorem{theorem4}{Theorem}[section]
\newtheorem{Remark}[theorem4]{Remark}

\begin{document}

\title[Approximation of invariant measure for the SACE]{Approximation of the invariant measure for the stochastic Allen--Cahn equation via an explicit fully discrete scheme}


\author[1]{\fnm{Yibo} \sur{Wang}}\email{wangyb@seu.edu.cn}

\author*[1]{\fnm{Wanrong} \sur{Cao}}\email{wrcao@seu.edu.cn}

\affil[1]{\orgdiv{School of Mathematics}, \orgname{Southeast University}, \orgaddress{ \city{Nanjing}, \postcode{210096}, \country{China}}}

%
%
%
%
%
%


\abstract{We propose a novel explicit fully discrete scheme for the stochastic Allen--Cahn equation (SACE) driven by an additive \textit{Q}-Wiener process. The scheme combines the spectral Galerkin method for spatial discretization with a tamed accelerated exponential integrator for temporal approximation. It is designed to alleviate the computational cost of implicit methods, enabling efficient long-time simulations and explicit approximation of the invariant measure. Uniform moment bounds and uniform weak convergence of the numerical solution are established, both essential for approximating the invariant measure. A refined taming strategy yields time-independent weak error estimates, and the weak convergence rate is shown to be sharp, improving upon existing results. Furthermore, by combining weak convergence analysis with exponential ergodicity, we derive an error bound for the approximation of the invariant measure. Theoretical results confirm the efficiency and accuracy of the proposed method in capturing the long-term statistical behavior of the SACE.}

\keywords{stochastic Allen--Cahn equation, invariant measure, explicit full discretization, weak convergence, infinite time horizon}


\pacs[MSC Classification]{60H35, 60H15, 65C30}

\maketitle

\section{Introduction}

The invariant measure plays a fundamental role in describing the long-term statistical behavior of stochastic evolution models, capturing the steady-state or equilibrium distribution of the underlying system. In recent years, significant progress has been made in the numerical approximation of invariant measures for nonlinear stochastic partial differential equations (SPDEs), particularly when the drift term satisfies a global Lipschitz condition \cite{Brehier2014, Brehier2016, Brehier2017, Chen2020}. These advances have deepened our understanding of long-term dynamics and provided valuable numerical tools for a wide range of scientific and engineering applications.

Among such models, the stochastic Allen--Cahn equation (SACE) serves as an effective phase field model for describing the evolution of interfaces under stochastic influences. The numerical analysis of the SACE, particularly for cases with additive noise, has been extensively studied. Prior works have established strong convergence results for fully discrete schemes, where the temporal discretization includes the backward Euler scheme \cite{QiWang2019}, operator splitting methods \cite{Brehier2019, BrehierGoude2019}, and adaptive time-stepping techniques \cite{ChenDangHong2024}. For the multiplicative noise-driven SACE, significant contributions in \cite{Feng2017, HuangShen2023,  XuCJ2023} have examined the impact of noise on numerical stability and convergence properties.

A key challenge in the numerical approximation of invariant measures is ensuring uniform moment bounds for numerical solutions that remain independent of time. This difficulty is particularly pronounced in nonlinear SPDEs, such as the SACE, where the drift term exhibits superlinear growth. Existing research has explored various approaches to address this issue. For instance, Br\'ehier et al. \cite{Brehier2016} introduced a postprocessed integrator to improve the convergence rate of numerical invariant measures. While this method enhances the accuracy of the standard linearized implicit Euler scheme for additive noise-driven SACEs, it remains restricted to linear SPDEs and does not fully address time-independent convergence analysis, which is crucial for long-time simulations. Similarly, Cui et al. \cite{Cui2021} investigated the invariant measure of parabolic SPDEs with non-globally Lipschitz coefficients, employing a spectral Galerkin method combined with an implicit Euler scheme. Their work established sharp weak convergence rates, demonstrating that the invariant measure could be accurately approximated.
For SACEs driven by multiplicative noise, Liu et al. \cite{Liu2024,Liu2023} analyzed drift-implicit Euler-based fully discrete schemes and proved the unique ergodicity of their numerical approximations. However, both \cite{Cui2021} and \cite{Liu2024,Liu2023} rely on implicit time discretization, which necessitates solving nonlinear equations iteratively at each time step. This significantly increases computational costs, particularly for long-time simulations, making explicit schemes a more desirable alternative.

To address these challenges, this paper proposes a novel explicit fully discrete scheme for the SACE driven by an additive $Q$-Wiener process in the Hilbert space $H := L^2(\mathcal{I})$, where $\mathcal{I} := (0,1)$ is the spatial domain. The proposed scheme is designed to circumvent the limitations of implicit methods, providing a computationally efficient alternative that is particularly suitable for long-time simulations and invariant measure approximation.
The model is given by: 
\begin{equation}\label{SACE}
	\left\{
	\begin{aligned}
		&du(t) + Au(t)dt = F(u(t))dt + dW(t), \quad t>0, \\
		&u(0) = u_0 \in H, 
	\end{aligned}
	\right.
\end{equation}
where $-A$ is the Laplacian operator with homogeneous Dirichlet boundary conditions, and $F$ is a Nemytskii operator of a real-valued one-sided Lipschitz function $f$, as specified in Assumption \ref{Asp:f}. The process $W(t)$ represents a (possibly cylindrical) $Q$-Wiener process on a stochastic basis $(\Omega, \mathcal{F}, \mathbb{P}, \{\mathcal{F}_t\}_{t \geq 0})$.  Under suitable conditions, the SACE possesses a unique mild solution, given by: 
\begin{equation}\label{mild solution1} 
	u(t) = S(t) u_0 + \int_0^t S(t-s) F(u(s))ds + \mathcal{O}_t, 
\end{equation} 
where $S(t) = e^{-tA}$ is the analytic semigroup generated by $-A$, and $\mathcal{O}_t := \int_0^t S(t-s) dW(s)$ denotes the stochastic convolution. Under a strong dissipation condition, the SACE is known to possess a unique ergodic invariant measure, as established in Theorem \ref{Th:ergodicity} and supported by previous studies \cite{Brehier2022,Cui2021}.

In this context, Br\'ehier \cite{Brehier2022} investigated the long-term behavior of an explicit tamed exponential Euler scheme, given by:
\begin{equation*}
	X_{n+1} = e^{-\Delta tA} X_{n} + A^{-1} (I-e^{-\Delta tA}) \frac{F(X_n)}{1+\Delta t \|F(X_n)\|_{L^2}} + e^{-\Delta tA} \Delta W_n.
\end{equation*}
This scheme allowed for moment estimates of the semi-discrete solutions, leading to the following bound:
\begin{equation*}
	\sup\limits_{0\leq n\Delta t \leq T} (\mathbb{E}[\|X_n\|_{L^{\infty}}^p])^{\frac{1}{p}} \leq (1+T) \mathcal{P}(\|x_0\|_{L^{\infty}}), \quad \forall T \in (0, \infty), 
\end{equation*}
where $X_n$ represented semi-discrete solutions, $\Delta t$ was the time step size and $\mathcal{P}$ was a polynomial function.  However, since this moment bound depends on the time horizon $T$, it does not provide a time-independent structure, thereby limiting its direct applicability to the numerical approximation of the invariant measure over long time horizons.

Building upon Br\'ehier's approach \cite{Brehier2022}, this work advances the state of the art by introducing a novel explicit fully discrete scheme and establishing a time-independent weak convergence analysis. This not only enables an efficient numerical approximation of the invariant measure but also provides a rigorous theoretical foundation for the long-term behavior of numerical solutions to the SACE \eqref{SACE}.

The primary contributions of this paper are as follows.

\vspace{0.5em}
$\bullet$ \textbf{Development of a novel explicit fully discrete scheme:}
We propose an explicit fully discrete numerical scheme for the SACE \eqref{SACE}:
\begin{equation*}
	u^{N}_{k+1} = S_N(\tau)u^{N}_{k}
	+ \frac{A_N^{-1}(I-S_N(\tau))F_N(u^{N}_{k})}{1 + \tau^{\beta}\|u^{N}_{k}\|_{L^\infty}^6 + \tau^{\beta}\|u^{N}_{k}\|_{\dot{H}^{\beta}}^6 }
	+ \int_{t_k}^{t_{k+1}} S_N(t_{k+1}-s) P_N dW(s).
\end{equation*} 
Unlike existing implicit schemes used in \cite{Cui2021,Liu2024,Liu2023}, our explicit approach eliminates the need for iterative solvers at each time step, significantly reducing computational complexity. A key theoretical result (Theorem \ref{Th:moment bound v}) establishes that the numerical solution satisfies uniform moment bounds independent of time, making it well-suited for long-term simulations. Moreover, leveraging this scheme, we provide an efficient and explicit method to approximate the invariant measure of the underlying stochastic system.

\vspace{0.5em}
$\bullet$ \textbf{Uniform moment bounds and sharp weak convergence analysis:}
We establish uniform moment bounds for the numerical solution, which play a crucial role in the subsequent weak convergence analysis. A fundamental challenge in weak error estimation is handling the superlinear growth of the nonlinear term, which we address by introducing a novel taming strategy. This approach enables a rigorous weak error analysis. Using Malliavin calculus, we derive the following time-independent weak error estimate:
\begin{theorem}\label{Th:full-discrete order}
	Let Assumptions  \ref{Asp:Initial value}--\ref{Asp:dissipative}, \ref{Asp:additional asp} hold, and assume $\|A^{\frac{\beta-1}{2}}Q^{\frac{1}{2}}\|_{\mathcal{L}_2(H)}<\infty$ with $\beta\in(0,1]$. Then for any test functions $\Phi\in C_b^2(H;\mathbb{R})$, $K\geq2$, and $\tau\in(0,\tau_0]$ with some arbitrary $\tau_0>0$, there exists a constant $C>0$ independent of $N$, $\tau$, $K$, and $T$, such that
	\begin{align*} 
		\left| \mathbb{E}[\Phi(u(K\tau))] - \mathbb{E}[\Phi(u^N_{K})] \right| \leq C (1+{(K\tau)}^{-\beta}) (\lambda_{N+1}^{-\beta}+\tau^{\beta}), 
	\end{align*} 
	where $u^N_K$ represent the fully discrete solutions given by \eqref{scheme1}. 
\end{theorem}
This result demonstrates that our scheme attains a sharp weak convergence rate, improving upon prior estimates, which often only achieve $\beta - \epsilon$ order weak convergence for arbitrarily small $\epsilon$ \cite{Brehier2022,Cai2021,Cui2021}.

\vspace{0.5em}
$\bullet$ \textbf{Application to the numerical approximation of invariant measures:}
Beyond their role in practical applications (e.g., option pricing in finance \cite{Bally1996,Bossy2021,WangZhaoZhang2024}), weak error estimates are essential for analyzing numerical approximations of invariant measures. The approximation error of the invariant measure ultimately reduces to the weak error, which typically enjoys a higher convergence rate than the strong error. Combining the weak error analysis with the exponential ergodicity of the continuous system (Theorem \ref{Th:ergodicity}), we rigorously derive the numerical error in approximating the invariant measure:
\begin{Corollary}\label{Corollary:1} 
	Under the same conditions as Theorem \ref{Th:full-discrete order}, for any test functions $\Phi\in C_b^2(H;\mathbb{R})$, $\beta\in(0,1]$, $\tau\in(0,\tau_0]$, and sufficiently large $K$, it holds that 
	\begin{align*} \left| \mathbb{E}[\Phi(u^N_K)] - \int_{H} \Phi d\mu \right| \leq C(u_0,Q,\Phi) (e^{-(\lambda_{1}-L_F)K\tau} + \lambda_{N+1}^{-\beta} + \tau^{\beta}), \end{align*} where $\mu$ denotes the invariant measure of the SACE \eqref{SACE}.
\end{Corollary}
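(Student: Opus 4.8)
The plan is to prove Corollary \ref{Corollary:1} by a triangle-inequality decomposition that separates the error into two pieces: the gap between the continuous ergodic average and the true invariant measure, and the weak discretization error already controlled by Theorem \ref{Th:full-discrete order}. Concretely, I would insert the exact solution $u(K\tau)$ as an intermediate term and write
\begin{align*}
	\left| \mathbb{E}\left[\Phi(V_K)\right] - \int_H \Phi\, d\mu \right|
	\leq \left| \mathbb{E}\left[\Phi(V_K)\right] - \mathbb{E}\left[\Phi(u(K\tau))\right] \right|
	+ \left| \mathbb{E}\left[\Phi(u(K\tau))\right] - \int_H \Phi\, d\mu \right|.
\end{align*}
The first term on the right is bounded directly by Theorem \ref{Th:full-discrete order}, yielding the contribution $C(1+(K\tau)^{-\beta})(\lambda_{N+1}^{-\beta}+\tau^\beta)$. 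Since we are working with large $K$ and $\tau \in (0,\tau_0]$, the factor $(K\tau)^{-\beta}$ is bounded (indeed it tends to $0$), so this term is absorbed into $C(\Phi,u_0,Q)(\lambda_{N+1}^{-\beta}+\tau^\beta)$.

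For the second term, I would invoke the exponential ergodicity of the SACE from Theorem \ref{Th:ergodicity}. Since $\mu$ is the invariant measure, we have $\int_H \Phi\, d\mu = \int_H \mathbb{E}[\Phi(u^y(K\tau))]\, d\mu(y)$, where $u^y$ denotes the solution started from $y$. Subtracting and using the contraction of the Markov semigroup,
\begin{align*}
	\left| \mathbb{E}\left[\Phi(u(K\tau))\right] - \int_H \Phi\, d\mu \right|
	= \left| \int_H \left( \mathbb{E}[\Phi(u^{u_0}(K\tau))] - \mathbb{E}[\Phi(u^{y}(K\tau))] \right) d\mu(y) \right|,
\end{align*}
which by the exponential convergence rate $e^{-(\lambda_1 - L_F)K\tau}$ of the semigroup toward equilibrium (the spectral gap $\lambda_1 - L_F$ coming from the smallest eigenvalue of $A$ against the one-sided Lipschitz constant $L_F$) is bounded by $C(\Phi,u_0,Q)\, e^{-(\lambda_1 - L_F)K\tau}$. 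Here I would use the regularity $\Phi \in C_b^2(H;\mathbb{R})$ together with the finiteness of the second moment of $\mu$ to control the dependence on the initial data $u_0$ and the covariance $Q$.

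Combining the two bounds gives the stated estimate. The main obstacle is ensuring that the constant in the ergodic-convergence term is genuinely uniform and correctly captures the dependence on $u_0$, $Q$, and $\Phi$; this requires that the exponential contraction in Theorem \ref{Th:ergodicity} holds in a norm compatible with the Lipschitz bound on $\Phi$, and that the invariant measure $\mu$ has finite moments so the integral against $d\mu(y)$ converges. Everything else is a routine application of the triangle inequality and the already-established weak error estimate.
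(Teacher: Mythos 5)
Your proposal is correct and follows essentially the same route as the paper: insert $u(K\tau)$ via the triangle inequality, bound the discretization term by Theorem \ref{Th:full-discrete order} (with $(K\tau)^{-\beta}$ bounded for large $K$), and bound the ergodic term by the exponential decay of Theorem \ref{Th:ergodicity}. The only cosmetic difference is that you re-derive the ergodicity bound through semigroup contraction against $\mu$, whereas the paper simply cites the quantitative estimate \eqref{exponential decay} together with Remark \ref{remark} to handle the random initial datum $u_0$, which gives the factor $1+\mathbb{E}\left[\|u_0\|_{L^\infty}\right]$ without needing moments of $\mu$.
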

This result provides a theoretical guarantee for the accuracy of numerical invariant measure approximations, reinforcing the practical significance of our approach.

The remainder of this paper is structured as follows. Section \ref{Sec:Preliminary} introduces essential notations, assumptions, and preliminary results, including the regularity and ergodicity of the mild solution and relevant properties from Malliavin calculus. In Section \ref{Sec:Spatial discrete}, we analyze the weak error of the spatial semi-discretization. Section \ref{Sec:Full discrete} presents the fully discrete scheme, establishes the uniform moment bounds, and proves Theorem \ref{Th:full-discrete order}. 
In Section \ref{Sec:numerical example}, we perform the numerical experiment to validate the theoretical results. 
Finally, Section \ref{section:conclusion} concludes the paper with a summary and discussion of potential future research directions.

\section{Preliminary}\label{Sec:Preliminary}
Recalling that $\mathcal{I} = (0,1)$, we denote by $L^p(\mathcal{I})$, for $p \geq 1$, the Banach space of real-valued, $p$-integrable functions, endowed with the associated norm $\|\cdot\|_{L^p(\mathcal{I})}$ (abbreviated as $\|\cdot\|_{L^p}$). 
When $p=2$, we simply write $H = L^2(\mathcal{I})$ with the inner product $\langle \cdot , \cdot \rangle$. 
Given a Banach space $(\mathcal{V}, \|\cdot\|_{\mathcal{V}})$, we denote by $L^p(\Omega; \mathcal{V})$ the space of $\mathcal{V}$-valued, $p$-integrable random variables, endowed with the norm 
$\|v\|_{L^p(\Omega; \mathcal{V})} = (\mathbb{E} \|v\|_{\mathcal{V}}^p)^{\frac{1}{p}}$. 
For a separable Hilbert space $(\mathcal{U}, \langle\cdot,\cdot\rangle_{\mathcal{U}}, \|\cdot\|_{\mathcal{U}})$, we denote by $\mathcal{L}(\mathcal{U})$ the space of all bounded linear operators on $\mathcal{U}$, endowed with the usual operator norm $\|\cdot\|_{\mathcal{L}(\mathcal{U})}$. 
By $\mathcal{L}_2(\mathcal{U})$, we denote the Hilbert space of Hilbert--Schmidt operators on $\mathcal{U}$, with the inner product
$\langle T_1, T_2 \rangle_{\mathcal{L}_2(\mathcal{U})} := \sum_{k\in\mathbb{N}} \langle T_1\eta_{k}, T_2\eta_{k}\rangle_{\mathcal{U}}$, 
and the corresponding Hilbert--Schmidt norm $\|T\|_{\mathcal{L}_2(\mathcal{U})} := \langle T,T \rangle_{\mathcal{L}_2(\mathcal{U})}^{\frac{1}{2}},$
where $\mathbb{N}$ represents the set of natural numbers, and $\{\eta_{k}\}_{k\in\mathbb{N}}$ is an orthonormal basis of $\mathcal{U}$. 
By $C_b^2(H;\mathbb{R})$, we denote the space of twice continuously differentiable functionals from $H$ to $\mathbb{R}$, which are not necessarily bounded, but whose Fr\'echet derivatives are bounded up to order 2.

Throughout the paper, we use $C$ to denote a generic positive constant, which may vary from one line to another but is always independent of the discretization parameters. 
We sometimes write $C(a,b,...)$ to specify its dependence on certain parameters $a,b,...$. 

We denote by $H^k(\mathcal{I})$ and $H^k_0(\mathcal{I})$, for $k \in \mathbb{N}$, the usual Sobolev spaces. 
There exists an orthonormal basis $\{\phi_k\}_{k\in\mathbb{N}}$ of $H$, and an increasing sequence $\{\lambda_{k}\}_{k\in\mathbb{N}}$ such that $A\phi_k = \lambda_{k}\phi_k$. 
The fractional powers of $A$, i.e., the operators $A^{\frac{s}{2}}$, are defined in the same manner as in \cite[Appendix B.2]{Kruse2014-1}. 
We introduce a family of separable Hilbert spaces $\dot{H}^s$, $s \in \mathbb{R}$, by setting 
$\dot{H}^s := \mathrm{dom}(A^{\frac{s}{2}})$, equipped with the inner product 
$\langle u, v \rangle_{\dot{H}^s} := \langle A^{\frac{s}{2}}u, A^{\frac{s}{2}}v \rangle$ and the norm 
$\|u\|_{\dot{H}^s} = \langle u, u \rangle_{\dot{H}^s}^{\frac{1}{2}}$. 

The properties of the semigroup $S(t)$ are stated as follows, which can be found in, e.g., \cite{Chen2020, Chen2023}, \cite[Lemma 2.5]{Kruse2014-2}. 
\begin{align}
	\|A^\nu S(t)\|_{\mathcal{L}(H)} &\leq C(\nu) t^{-\nu} e^{-\frac{\lambda_{1}}{2}t}, \quad t > 0, \ \nu \geq 0, \label{semigroup1} \\
	\|A^{-\rho}(S(t)-S(s))\|_{\mathcal{L}(H)} &\leq C (t-s)^{\rho} e^{-\frac{\lambda_{1}}{2}s}, \quad 0 \leq s \leq t, \ \rho \in [0,1], \label{semigroup2} \\
	\Big\| A^{\rho} \int_{s}^{t} S(t-r) \psi \, dr \Big\| &\leq C(t-s)^{1-\rho} \|\psi\|, \quad \psi \in H, \ 0 \leq s < t, \ \rho \in [0,1].  \label{semigroup_sharp}
\end{align} 

We assume that the covariance operator $Q$ of the $Q$-Wiener process $W(t)$ is self-adjoint and non-negative definite, commutes with $A$, and satisfies
\begin{equation*}
	\|A^{\frac{\beta-1}{2}}\|_{\mathcal{L}_2^0}
	= \|A^{\frac{\beta-1}{2}}Q^{\frac{1}{2}}\|_{\mathcal{L}_2(H)} < \infty, \quad \beta \in (0,1], 
\end{equation*} 
where $\mathcal{L}_2^0 := \mathcal{L}_2(U_0; H)$ denotes the space of Hilbert--Schmidt operators from the Hilbert space $U_0 := Q^{1/2}(H)$ to $H$. 
Note that two important types of noise, space-time white noise for $\beta = \frac{1}{2} - \epsilon$ (i.e., $Q = I$), and trace-class noise for $\beta = 1$, are included in this setting. 

\begin{Assumption}\label{Asp:Initial value}
	Let $u_0: \Omega \to H$ be an $\mathcal{F}_0$-measurable random variable. 
	Assume that for some sufficiently large positive integer $\tilde{p}$, 
	\begin{equation*}
		\|u_0\|_{L^{\tilde{p}}(\Omega; \dot{H}^{\beta})} + \|u_0\|_{L^{\tilde{p}}(\Omega; L^{\infty})} < \infty. 
	\end{equation*}
\end{Assumption}

\begin{Assumption}\label{Asp:f}
	Let $f(x) := -a_3 x^3 + a_2 x^2 + a_1 x + a_0$ be a real-valued polynomial with $a_3 > 0$, $a_i \in \mathbb{R}$, $i = 0,1,2$, and let $F: L^6(\mathcal{I}) \to H$ be the Nemytskii operator given by 
	\[
	F(v)(x) := f(v(x)) = -a_3 v^3 + a_2 v^2 + a_1 v + a_0. 
	\]
\end{Assumption}
For $v, \zeta, \zeta_1, \zeta_2 \in L^6(\mathcal{I})$, it holds that
\begin{align*}
	(F'(v)(\zeta))(x) &= f'(v(x)) \zeta(x) = ( -3a_3 v^2(x) + 2a_2 v(x) + a_1 ) \zeta(x), \\
	(F''(v)(\zeta_1, \zeta_2))(x) &= f''(v(x)) \zeta_1(x) \zeta_2(x) = ( -6a_3 v(x) + 2a_2 ) \zeta_1(x) \zeta_2(x).
\end{align*}
By denoting $L_F := \sup_{x \in \mathbb{R}} f'(x)=  a_1+\frac{a_2^2}{3a_3} < +\infty$, it is straightforward to obtain the following estimates:
\begin{equation}\label{Est:F1}
	\begin{aligned}
		\langle u-v, F(u)-F(v) \rangle &\leq L_F \|u-v\|^2, \\
		\langle u, F'(v)u \rangle &\leq L_F \|u\|^2.
	\end{aligned}
\end{equation}
Moreover, there exists a constant $C_F > 0$ such that
\begin{equation}\label{Est:F2}
	\begin{aligned}
		\|F(u)-F(v)\| &\leq C_F ( \|u\|_{L^\infty}^2 + \|v\|_{L^\infty}^2 + 1 ) \|u-v\|, \\
		\|F'(v)u\| &\leq C_F ( \|v\|_{L^\infty}^2 + 1 ) \|u\|, \\
		\|F''(\zeta)(u,v)\|_{\dot{H}^{-1}} &\leq C_F ( \|\zeta \|_{L^\infty} + 1 ) \|u\| \|v\|.
	\end{aligned}
\end{equation}
The regularity properties of the stochastic convolution have been extensively studied (see, e.g., \cite{Brehier2019,Wang2020}). Some of these are summarized in the following lemma.

\begin{Lemma}\label{Le:regularity O}
	For $p \geq 2$, $\beta \in (0, 1]$ and $\gamma \in [0, \beta]$, it holds that
	\begin{align*}
		&	\sup_{t \geq 0} \|\mathcal{O}_t\|_{L^p(\Omega;\dot{H}^\beta)} + \sup_{t \geq 0} \|\mathcal{O}_t\|_{L^p(\Omega;L^\infty)} < \infty, \\
		&	\|\mathcal{O}_t-\mathcal{O}_s\|_{L^p(\Omega;\dot{H}^\gamma)} \leq C (t-s)^{\frac{\beta-\gamma}{2}}, \quad 0 \leq s \leq t.
	\end{align*}
\end{Lemma}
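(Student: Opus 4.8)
The plan is to derive all three bounds from the It\^o isometry, working in the eigenbasis $\{\phi_k\}$ of $A$ and exploiting that $Q$ commutes with $A$, so that $Q\phi_k = q_k\phi_k$ with $q_k\geq0$ and the standing assumption reads $\|A^{\frac{\beta-1}{2}}Q^{\frac{1}{2}}\|_{\mathcal{L}_2(H)}^2 = \sum_k \lambda_k^{\beta-1}q_k < \infty$. For the $\dot{H}^\beta$-bound I would first note that $\mathcal{O}_t$ is a Wiener integral of deterministic operators, so the Burkholder--Davis--Gundy inequality (equivalently, Gaussianity) reduces the $L^p(\Omega)$-moment to the second moment, and diagonalizing gives
\[
\mathbb{E}\|\mathcal{O}_t\|_{\dot{H}^\beta}^2 = \int_0^t \|A^{\frac{\beta}{2}}S(t-s)Q^{\frac12}\|_{\mathcal{L}_2(H)}^2\,ds = \sum_k \lambda_k^{\beta} q_k \int_0^t e^{-2\lambda_k(t-s)}\,ds \leq \tfrac12\sum_k \lambda_k^{\beta-1}q_k.
\]
The decisive point is that performing the time integral \emph{before} summing converts the factor $\lambda_k^{\beta}$ into $\lambda_k^{\beta-1}$, so the bound equals $\tfrac12\|A^{\frac{\beta-1}{2}}Q^{\frac12}\|_{\mathcal{L}_2(H)}^2<\infty$; uniformity in $t$ comes for free from $1-e^{-2\lambda_k t}\leq 1$.

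For the $L^\infty$-bound the obstacle is that $\dot{H}^\beta\hookrightarrow L^\infty$ fails in one dimension once $\beta\leq\tfrac12$ (notably the white-noise case), so I would use the factorization method. Fixing $\alpha\in(0,\beta/2)$, I write $\mathcal{O}_t = c_\alpha\int_0^t (t-s)^{\alpha-1}S(t-s)Z_s\,ds$ with $Z_s := \int_0^s (s-r)^{-\alpha}S(s-r)\,dW(r)$. The same eigenbasis computation, now producing $\int_0^\infty u^{-2\alpha}e^{-2\lambda_k u}\,du = C_\alpha\lambda_k^{2\alpha-1}$, controls $\sup_{s\geq0}\|Z_s\|_{L^p(\Omega;W^{\gamma,r})}$ for any $r\geq2$ and any $\gamma$ with $\gamma+2\alpha<\beta$, using a stochastic-integral inequality of BDG type valid in the martingale-type-$2$ space $W^{\gamma,r}$ together with \eqref{semigroup3}. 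Choosing $r$ large enough that the embedding $W^{\sigma,r}(\mathcal{I})\hookrightarrow L^\infty(\mathcal{I})$ holds with $\sigma$ slightly above $1/r$ and $\gamma\leq\sigma$, applying \eqref{semigroup3} to the kernel $S(t-s)$, and taking $L^p(\Omega)$-norms, I obtain
\[
\|\mathcal{O}_t\|_{L^p(\Omega;L^\infty)} \leq C\Big(\int_0^t (t-s)^{\alpha-1-\delta}e^{-\frac{\lambda_1}{2}(t-s)}\,ds\Big)\,\sup_{s\geq0}\|Z_s\|_{L^p(\Omega;W^{\gamma,r})},
\]
where $\delta=(\sigma-\gamma)/2\in(0,\alpha)$; the integral converges at $s=t$ since $\alpha-\delta>0$ and is bounded uniformly in $t$ thanks to the exponential factor.

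For the time-increment estimate I would use the decomposition
\[
\mathcal{O}_t - \mathcal{O}_s = (S(t-s)-I)\mathcal{O}_s + \int_s^t S(t-r)\,dW(r).
\]
The first term is handled by writing $A^{\frac{\gamma}{2}}(S(t-s)-I) = A^{\frac{\gamma-\beta}{2}}(S(t-s)-I)\,A^{\frac{\beta}{2}}$ and invoking \eqref{semigroup2} with $\rho=\frac{\beta-\gamma}{2}\in[0,1]$ to produce the factor $(t-s)^{\frac{\beta-\gamma}{2}}$, after which the already-established uniform bound on $\|\mathcal{O}_s\|_{L^p(\Omega;\dot{H}^\beta)}$ closes it. The second term is treated by the It\^o isometry as in the first step, where the elementary inequality $1-e^{-x}\leq x^{\beta-\gamma}$ (valid for $x\geq0$, $\beta-\gamma\in[0,1]$) extracts exactly $(t-s)^{\beta-\gamma}$ from $\int_s^t e^{-2\lambda_k(t-r)}\,dr = (1-e^{-2\lambda_k(t-s)})/(2\lambda_k)$, leaving the convergent sum $\sum_k\lambda_k^{\beta-1}q_k$.

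The genuinely delicate part is the time-uniform $L^\infty$-bound for small $\beta$: the $L^2$-based chain of estimates does not suffice, and one must run the factorization in $L^r$-based Sobolev spaces while simultaneously balancing the factorization exponent $\alpha$, the embedding threshold, and the integrability of the singular kernel $(t-s)^{\alpha-1-\delta}$, all the while retaining the exponential decay of $S(\cdot)$ to guarantee that the constants are independent of $t$ as $t\to\infty$. Everything else is routine It\^o-isometry bookkeeping, and the required estimates are essentially those of \cite{Wang2020,Brehier2019,Wang2015} adapted to track the time-independence of the constants.
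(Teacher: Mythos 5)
The paper offers no proof of this lemma at all, deferring to the cited literature \cite{Wang2020,Brehier2019,Wang2015}; your argument is a correct reconstruction of exactly the techniques those references use --- It\^o isometry plus Gaussian moment equivalence in the eigenbasis of $A$ for the time-uniform $\dot{H}^{\beta}$ bound, the factorization method in $L^r$-based Sobolev spaces (needed precisely because $\dot{H}^{\beta}\hookrightarrow L^{\infty}$ fails for $\beta\leq\tfrac12$) for the $L^{\infty}$ bound, and the splitting $\mathcal{O}_t-\mathcal{O}_s=(S(t-s)-I)\mathcal{O}_s+\int_s^t S(t-r)\,dW(r)$ together with $1-e^{-x}\leq x^{\beta-\gamma}$ for the increment. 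The one detail left implicit is that \eqref{semigroup3} carries no exponential decay factor, so the uniform-in-$t$ constant in your factorization integral requires an additional semigroup splitting (e.g. routing through $L^2$ via \eqref{semigroup1} for $t-s\geq1$), a standard point that you correctly identify as the delicate part of the argument.
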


The regularity properties of $u(t)$ are summarized in the following theorem. We omit the proof since it is standard and refer to \cite[Chapter 6]{Cerrai2001}, \cite[Proposition 3.1]{Brehier2022} for further details.

\begin{theorem}
	Under Assumptions \ref{Asp:Initial value}--\ref{Asp:f}, for $p \geq 2$ and $\beta \in (0, 1]$, it holds that
	\begin{equation}\label{regularity u}
		\sup_{t \geq 0} \| u(t) \|_{L^p(\Omega; L^\infty)} + \sup_{t \geq 0} \| u(t) \|_{L^p(\Omega; \dot{H}^\beta)} \leq C(u_0, p, Q) < \infty.
	\end{equation}
	Furthermore, for $0 \leq s < t$,
	\begin{equation}\label{Holder u}
		\|u(t)-u(s)\|_{L^p(\Omega; H)} \leq C(u_0, p, Q) (t - s)^{\frac{\beta}{2}}.
	\end{equation}
\end{theorem}

If necessary, we write $u(t; u_0)$ to emphasize the dependence on the initial value $u_0$ of the mild solution to \eqref{SACE}. In order to derive the ergodicity of the SACE and the weak convergence error of numerical approximations over an infinite time interval, the following assumption is required.

\begin{Assumption}\label{Asp:dissipative}
	Assume that $L_F < \lambda_1$.
\end{Assumption}

\begin{theorem}[Proposition 3.3 in \cite{Brehier2022}]\label{Th:ergodicity}
	Under Assumptions \ref{Asp:Initial value}--\ref{Asp:dissipative}, there exists a unique invariant measure $\mu$ for the SACE \eqref{SACE}. Moreover, for a Lipschitz continuous function $\varphi: L^p(\mathcal{I}) \to \mathbb{R}$ and for any $z \in L^p(\mathcal{I})$, $p \geq 2$,
	\begin{equation}\label{exponential decay}
		\Big| \mathbb{E}[\varphi(u(t; z))] - \int_{L^p(\mathcal{I})} \varphi \, d\mu \Big| \leq C(\varphi) e^{-(\lambda_1 - L_F)t} (\| z \|_{L^p} + 1).
	\end{equation}
\end{theorem}

\begin{Remark}\label{remark}
	From Theorem \ref{Th:ergodicity}, it follows that for an $L^p$-valued random variable $u_0: \Omega \rightarrow L^p$, we have
	\begin{align*}
		\Big| \mathbb{E}[\varphi(u(t;u_0))] - \int_{L^p(\mathcal{I})} \varphi \, d\mu \Big|
		\leq C(\varphi) e^{-(\lambda_1 - L_F)t} ( \mathbb{E}[\|u_0\|_{L^p}] + 1 ),
	\end{align*}
	where the constant $C(\varphi)$ depends on the test function $\varphi$.
\end{Remark}

Next, we introduce some notations related to Malliavin calculus that will be used in the subsequent analysis. For a comprehensive treatment, we refer to the monograph by Nualart \cite{Nualart2006}.

Let $\mathcal{W}: L^2([0,T]; U_0) \rightarrow L^2(\Omega)$ be an isonormal process, where for any $\psi \in L^2([0,T]; U_0)$, $\mathcal{W}(\psi)$ is a centered Gaussian random variable, and the covariance satisfies
\[
\mathbb{E}[\mathcal{W}(\psi_1) \mathcal{W}(\psi_2)] = \langle \psi_1, \psi_2 \rangle_{L^2([0,T]; U_0)},
\quad \psi_1, \psi_2 \in L^2([0,T]; U_0).
\]
For given $\psi_m \in L^2([0,T]; U_0)$ with $m = 1, 2, \dots, M$ and $h_k \in H$ with $k=1,2,\dots, K$, where $K, M \in \mathbb{N}$, we define a family of smooth $H$-valued random variables as follows:
\begin{equation*}
	\mathcal{S}(H) := \Big\{ G = \sum_{k=1}^{K} f_k( \mathcal{W}(\psi_1), \mathcal{W}(\psi_2), \dots, \mathcal{W}(\psi_M) ) h_k : f_k \in C_p^\infty(\mathbb{R}^M; \mathbb{R}) \Big\},
\end{equation*}
where $C_p^\infty(\mathbb{R}^M; \mathbb{R})$ represents the space of smooth functions from $\mathbb{R}^M$ to $\mathbb{R}$ with polynomial growth in their derivatives. 

The Malliavin derivative of $G \in \mathcal{S}(H)$ is defined as
\begin{equation*}
	\mathcal{D}_t G := \sum_{k=1}^{K} \sum_{m=1}^{M} \partial_m f_k ( \mathcal{W}(\psi_1), \mathcal{W}(\psi_2), \dots, \mathcal{W}(\psi_M) ) h_k \otimes \psi_m(t),
\end{equation*}
where the tensor product $h_k \otimes \psi_m(t) \in \mathcal{L}_2^0$ for $k = 1, 2, \dots, K$ and $m = 1, 2, \dots, M$ is defined by
\begin{equation*}
	(h_k \otimes \psi_m(t))(\varphi) := \langle \psi_m(t), \varphi \rangle_{U_0} h_k \quad \text{for all } \varphi \in U_0, \ h_k \in H, \ t \in [0,T].
\end{equation*}
We denote by $\mathcal{D}_t^{\varphi} G = \mathcal{D}_t G (\varphi)$ the Malliavin derivative of $G$ in the direction $\varphi \in U_0$. 

We now list some fundamental properties of the Malliavin derivative. Firstly, if $G$ is $\mathcal{F}_t$-measurable and $s > t$, then $\mathcal{D}_s G = 0$. Additionally, since $\mathcal{D}_t$ is a closable operator, we define the space $\mathbb{D}^{1,2}(H)$ as the closure of $\mathcal{S}(H)$ under the following norm
\begin{equation*}
	\|G\|_{\mathbb{D}^{1,2}(H)} = \left( \mathbb{E}[\|G\|^2] + \int_0^T \mathbb{E}[\|\mathcal{D}_s G\|_{\mathcal{L}_2^0}^2] \, ds \right)^{\frac{1}{2}}.
\end{equation*}
For a separable Hilbert space $\mathcal{U}$, the chain rule holds:
\begin{equation}\label{chain rule}
	\mathcal{D}_t^{\varphi}(g(G)) = g'(G) \cdot \mathcal{D}_t^{\varphi} G, \quad g \in C_b^1(H;\mathcal{U}), \ G \in \mathbb{D}^{1,2}(H), \ \varphi \in U_0. 
\end{equation}
Finally, we quote the Malliavin integration by parts formula. For $G \in \mathbb{D}^{1,2}(H)$ and an adapted process $\Theta \in L^2([0,T]; \mathcal{L}_2^0)$, we have
\begin{equation}\label{Malliavin integral by parts}
	\mathbb{E}\left[ \left\langle \int_{0}^{T} \Theta(t)dW(t),G \right\rangle \right] 
	= \mathbb{E}\left[ \int_{0}^{T} \left\langle \Theta(t),\mathcal{D}_tG \right\rangle_{\mathcal{L}_2^0} dt \right]. 
\end{equation}
The Malliavin derivative of the It\^o integral $\int_0^t \Theta(r) \, dW(r)$ satisfies
\begin{equation}\label{Malliavin Ito}
	\mathcal{D}_s^{\varphi} \int_{0}^{t} \Theta(r) dW(r) = \int_{0}^{t} \mathcal{D}_s^{\varphi} \Theta(r) dW(r) + \Theta(s) \varphi, \quad  \varphi \in U_0, \ 0\leq s \leq t \leq T. 
\end{equation}

\section{Error estimate for the spatial semi-discretization}\label{Sec:Spatial discrete}
In this section, we employ the spectral Galerkin method for spatial semi-discretization and derive the weak approximation error. 
For $N \in \mathbb{N}$, we define the spectral Galerkin projection space $H^N := \text{span} \{ \phi_1,\phi_2,...,\phi_N \} \subset H$,  and the projection $P_N: \dot{H}^s \rightarrow H^N$ by
\begin{equation*}
	P_N \psi = \sum_{k=1}^{N} \langle \psi, \phi_k \rangle \phi_k, \quad \psi \in \dot{H}^s, \, s \in \mathbb{R}.
\end{equation*}
It follows immediately that
\begin{equation}\label{projection}
	\|(P_N-I)\psi\| \leq \lambda_{N+1}^{-\frac{s}{2}} \|\psi\|_{\dot{H}^s}, \quad \psi \in \dot{H}^s , \ s \geq 0. 
\end{equation}

The spectral Galerkin method applied to \eqref{SACE} consists of constructing $u^{N} \in H^N$, which satisfies 
\begin{equation*}
	du^{N}(t) + A_N u^{N}(t) dt
	= F_N(u^{N}(t)) dt + P_N dW, \quad 
	u^N(0) = P_N u_0,
\end{equation*}
where $A_N := P_N A$ and $F_N := P_N F$. 
The mild solution of the semi-discretization can be written as
\begin{equation}\label{mild uN}
	u^{N}(t) = S_{N}(t) P_N u_0 + \int_{0}^{t} S_{N}(t-s) F_N(u^{N}(s)) ds + \mathcal{O}_t^N,
\end{equation}
where $S_N(t) := e^{-t A_N}$, $t \geq 0$, is the analytic semigroup generated by $-A_N$, and $\mathcal{O}_t^N := \int_{0}^{t} S_N(t-s) P_N \, dW(s)$.

The next lemma provides smoothness properties of the semigroup $S_N(t)$, $t > 0$. Its proof follows an argument analogous to that in \cite[Lemmas 3.2, 4.1]{Wang2020} and is therefore omitted. 
\begin{Lemma} 
	For any $N\in\mathbb{N}$ and $t>0$, it holds that 
	\begin{align}
		\|P_NS(t)\psi\|_{L^p} &\leq C t^{-\frac{p-2}{4p}} e^{-\frac{\lambda_{1}t}{2}} \|\psi\|, \quad p\geq2, \label{Est:PN S2} \\ 
		\|P_NS(t)\psi\|_{L^\infty} &\leq C t^{-\frac{1}{2}} e^{-\frac{\lambda_{1}t}{4}} \|\psi\|_{L^1},  \label{Est:PN S3} \\ 
		\|P_NS(t)\psi\|_{L^3} &\leq C t^{-\frac{5}{24}} e^{-\frac{\lambda_{1}t}{4}} \|\psi\|_{L^\frac{4}{3}},  \label{Est:PN S4} \\
		\|P_N S(t) \psi\|_{L^{\infty}} &\leq C t^{\frac{2\nu - 1}{4}} e^{-\frac{\lambda_1}{2} t} \|\psi\|_{\dot{H}^{\nu}}, \quad  \nu \in [0, \tfrac{1}{2}). \label{Est:PN S 1} 
	\end{align}
\end{Lemma}

The boundedness of moments of $u^N(t)$ relies on the estimate of a perturbed differential equation on $H^N$, 
\begin{equation*}
	\frac{dw}{dt}=-A_N w + F_N(w+z), \quad w(0)=0, 
\end{equation*}
which possesses a unique mild solution, given by 
\begin{equation}\label{perturbed 1}
	w(t) = \int_{0}^{t} S_N(t-s) F_N(w(s)+z(s)) ds.  
\end{equation}
For brevity, for $p\geq2$, $t\geq0$ and $\varrho\geq0$, we introduce the following notation, 
\begin{equation*}
	\|z\|_{\mathbb{L}_{\varrho}^p(\mathcal{I}\times [0,t])} := \left( \int_{0}^{t} e^{-\varrho(t-s)} \|z(s)\|_{L^p}^p ds \right)^{\frac{1}{p}}.
\end{equation*} 
Sometimes we also write $\|z\|_{\mathbb{L}_{\varrho}^p}$ for short. 
The following lemma gives the upper bound of $L^\infty$-norm of $w(t)$, which is proved in Appendix \ref{Appendix perturbed}. 
\begin{Lemma}\label{lem:lem3.2}
	Under Assumption \ref{Asp:f}, for $w,z \in H^N$ given by \eqref{perturbed 1}, there exists a constant $C>0$ independent of $N$ and $t$, such that 
	\begin{equation}\label{Est:w L8}
		\|w(t)\|_{L^\infty} \leq C \big( \|z\|^9_{\mathbb{L}_{\lambda_{1}/8}^9(\mathcal{I}\times[0,t])} + 1 \big), \quad \forall t\geq0. 
	\end{equation}
\end{Lemma}

\begin{Assumption}\label{Asp:additional asp}
	Assume that $\|P_Nu_0\|_{L^{\tilde{p}}(\Omega;L^\infty)} + \|P_Nu_0\|_{L^{\tilde{p}}(\Omega;\dot{H}^1)} < \infty$ for sufficiently large $\tilde{p}\in\mathbb{N}$. 
\end{Assumption}

In the next lemma we give the spatial and temporal regularities of semi-discrete solution $u^N(t)$. 
\begin{Lemma}
	Under Assumptions \ref{Asp:Initial value}--\ref{Asp:f}, \ref{Asp:additional asp}, for $p\geq2$ and $\beta\in(0,1]$, it holds that 
	\begin{align}\label{regularity uN}
		\sup\limits_{N\in\mathbb{N},t\geq0} \|P_Nu(t)\|_{L^p(\Omega;L^\infty)} 
		+ \sup\limits_{N\in\mathbb{N},t\geq0} \|u^N(t)\|_{L^p(\Omega;L^\infty)}
		+ \sup\limits_{N\in\mathbb{N},t\geq0} \|u^N(t)\|_{L^p(\Omega;\dot{H}^{\beta})}
		< \infty. 
	\end{align} 
	Furthermore, for $0\leq s \leq t$, it holds that  $\|u^N(t)-u^N(s)\|_{L^p(\Omega;H)} \leq C(t-s)^{\frac{\beta}{2}}$, and 
	\begin{equation}\label{Holder F(uN)}
		\|F(u^N(t))-F(u^N(s))\|_{L^p(\Omega;H)} 
		\leq C (t-s)^{\frac{\beta}{2}}. 
	\end{equation}
\end{Lemma}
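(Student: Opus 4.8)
The plan is to run a mild-solution regularity argument on the semi-discrete equation \eqref{mild uN}, with the crucial super-linear $L^\infty$ estimate supplied by the perturbation lemma \eqref{Est:w L8}. Set $z(s) := S_N(s)P_N u_0 + \mathcal{O}_s^N$ and $w := u^N - z$; subtracting $z$ from \eqref{mild uN} shows that $w$ solves exactly the perturbed equation \eqref{perturbed 1}, so that \eqref{Est:w L8} applies. I would first record the two uniform-in-$N$, uniform-in-time bounds on $z$ that are needed, namely $\sup_{N,t}\|S_N(t)P_N u_0\|_{L^{\tilde p}(\Omega;L^\infty)}<\infty$ and $\sup_{N,t}\|\mathcal{O}_t^N\|_{L^{\tilde p}(\Omega;L^\infty)}<\infty$. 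The first follows from $P_N S(t)=S(t)P_N$, the contractivity of $S(t)$ on $\dot H^1$, the one-dimensional embedding $\dot H^1\hookrightarrow L^\infty$, and Assumption \ref{Asp:additional asp}. The second, for $\beta>\tfrac12$, is immediate from $\|P_N\mathcal{O}_t\|_{\dot H^\beta}\le\|\mathcal{O}_t\|_{\dot H^\beta}$ together with Lemma \ref{Le:regularity O} and the embedding; for $\beta\le\tfrac12$ I would reprove Lemma \ref{Le:regularity O} for $\mathcal{O}^N$ by the factorization method using the dimension-free smoothing estimate \eqref{Est:PN S 1}, which keeps the constants independent of $N$.

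With these in hand, the $L^\infty$ moment bound follows: from \eqref{Est:w L8} and $u^N=w+z$,
\[
\|u^N(t)\|_{L^\infty}^p \le C\bigl(1+\|z\|^{9p}_{\mathbb{L}_{\lambda_1/8}^9(\mathcal{I}\times[0,t])}+\|z(t)\|_{L^\infty}^p\bigr).
\]
Taking expectation and applying Jensen's inequality to the finite measure $e^{-(\lambda_1/8)(t-s)}\,ds$ (whose mass is $\le 8/\lambda_1$ uniformly in $t$) reduces $\mathbb{E}\,\|z\|^{9p}_{\mathbb{L}_{\lambda_1/8}^9}$ to $\sup_s\mathbb{E}\,\|z(s)\|_{L^9}^{9p}$, which is controlled by the two bounds above since $\|\cdot\|_{L^9}\le\|\cdot\|_{L^\infty}$ on $\mathcal{I}$; this yields $\sup_{N,t}\|u^N(t)\|_{L^p(\Omega;L^\infty)}<\infty$ for $p\le\tilde p/9$. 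The bound on $\|P_N u(t)\|_{L^\infty}$ is obtained directly from \eqref{mild solution} projected by $P_N$: the initial term is handled as above, the stochastic convolution by the $\mathcal{O}^N$ bound, and the Duhamel term by the smoothing estimate \eqref{Est:PN S 3} together with $\|F(u(s))\|_{L^1}\le C(1+\|u(s)\|_{L^\infty}^3)$, the integrability of $(t-s)^{-1/2}e^{-\lambda_1(t-s)/4}$, Minkowski's inequality in $L^p(\Omega)$, and the solution regularity \eqref{regularity u}.

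For $\|u^N(t)\|_{\dot H^\beta}$ I would again use \eqref{mild uN}: the initial term is bounded by $\|P_N u_0\|_{\dot H^\beta}\le\|P_N u_0\|_{\dot H^1}$; the Duhamel term by applying $A^{\beta/2}$, using \eqref{semigroup1} with $\nu=\beta/2$ together with $\|F_N(u^N(s))\|\le C(1+\|u^N(s)\|_{L^\infty}^3)$ and the $L^\infty$ bound just established (the kernel $(t-s)^{-\beta/2}e^{-\lambda_1(t-s)/2}$ being integrable since $\beta/2<1$); and the convolution term by $\sup_{N,t}\|\mathcal{O}_t^N\|_{\dot H^\beta}<\infty$. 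The temporal estimate $\|u^N(t)-u^N(s)\|_{L^p(\Omega;H)}\le C(t-s)^{\beta/2}$ is the usual three-term split of \eqref{mild uN}: the semigroup difference on the initial datum is treated with \eqref{semigroup2} ($\rho=\beta/2$) after writing $S_N(t)-S_N(s)=(S_N(t-s)-I)S_N(s)$ and using the $\dot H^\beta$ regularity; the Duhamel difference splits into the piece $\int_s^t$ bounded by \eqref{semigroup sharp} with $\rho=0$ and the bounded $L^p(\Omega;H)$-norm of $F(u^N)$, and a semigroup-difference piece again handled by \eqref{semigroup2}; the convolution difference is controlled by $\|\mathcal{O}^N_t-\mathcal{O}^N_s\|_{L^p(\Omega;H)}=\|P_N(\mathcal{O}_t-\mathcal{O}_s)\|_{L^p(\Omega;H)}\le C(t-s)^{\beta/2}$ from Lemma \ref{Le:regularity O} with $\gamma=0$.

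Finally, \eqref{Holder F(uN)} follows from the local Lipschitz bound in \eqref{Est:f}: writing $\|F(u^N(t))-F(u^N(s))\|\le C_f(\|u^N(t)\|_{L^\infty}^2+\|u^N(s)\|_{L^\infty}^2+1)\|u^N(t)-u^N(s)\|$ and applying the Cauchy--Schwarz inequality in $L^p(\Omega)$ splits the right-hand side into the $L^{2p}(\Omega;L^\infty)$ moments (finite by the first part) and $\|u^N(t)-u^N(s)\|_{L^{2p}(\Omega;H)}\le C(t-s)^{\beta/2}$ (the temporal estimate at exponent $2p$). The main obstacle is the uniform-in-$(N,t)$ $L^\infty$ moment bound: because $P_N$ is not uniformly bounded on $L^\infty$ one cannot simply project the known regularity of $u$, and because $F$ is cubic one must route the estimate through the dissipative perturbation lemma \eqref{Est:w L8} while keeping all constants independent of $N$ (via the dimension-free estimates \eqref{Est:PN S 1}--\eqref{Est:PN S 4}) and of $t$ (via the exponential weights). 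Once this bound is secured, the remaining parts are routine.
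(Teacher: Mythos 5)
Your proposal is correct and follows essentially the same route as the paper: the core step is identical (write $u^N = w + z$ with $z(s) = S_N(s)P_Nu_0 + \mathcal{O}_s^N$ so that $w$ solves \eqref{perturbed 1}, invoke \eqref{Est:w L8}, then take moments using the exponential weight in the $\mathbb{L}^9_{\lambda_1/8}$ norm together with Assumption \ref{Asp:additional asp} and the bounds \eqref{regularity ON} on $\mathcal{O}_t^N$). The only differences are cosmetic: the paper cites \cite[Lemma 2]{Cui2021} for \eqref{regularity ON} where you propose to reprove it via factorization and \eqref{Est:PN S 1}, and the paper dismisses as "standard" the $P_Nu(t)$, $\dot{H}^{\beta}$, and H\"older estimates that you work out explicitly (correctly) via \eqref{Est:PN S 3}, \eqref{semigroup1}--\eqref{semigroup sharp}, and Lemma \ref{Le:regularity O}.
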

\begin{proof}
	By denoting $\tilde{u}^{N}(t) := u^{N}(t) - (S_{N}(t)P_Nu_0 + \mathcal{O}_t^N)$, we can rewrite \eqref{mild uN} as 
	\begin{equation*} 
		\tilde{u}^{N}(t) = \int_{0}^{t} S_{N}(t-s) F_N\left( \tilde{u}^{N}(s) + S_{N}(s)P_Nu_0 + \mathcal{O}_s^N \right) ds, 
	\end{equation*}
	which, together with \eqref{Est:w L8}, means that 
	\begin{equation*} 
		\|\tilde{u}^{N}(t)\|_{L^\infty} \leq C \left( \|S_N(t)P_Nu_0\|_{\mathbb{L}^{9}_{\lambda_{1}/8}(\mathcal{I}\times [0,t])}^{9} 
		+ \|\mathcal{O}_t^N\|_{\mathbb{L}^{9}_{\lambda_{1}/8}(\mathcal{I}\times [0,t])}^{9} + 1 \right). 
	\end{equation*}
	It follows from Minkowski's inequality, the definition of $\|\cdot\|_{\mathbb{L}_{\varrho}^p(\mathcal{I}\times [0,t])}$, H\"older's inequality, and $\|S(t)u\|_{L^p}\leq C\|u\|_{L^p}$ that 
	\begin{align*} 
		&\|u^N(t)\|_{L^p(\Omega;L^\infty)} \leq C \left( 1 + \left(\mathbb{E}\big[\|S_{N}(t)P_Nu_0\|_{\mathbb{L}^{9}_{\lambda_{1}/8} (\mathcal{I}\times[0,t])}^{9p} \big]\right)^{\frac{1}{p}}
		+ \left(\mathbb{E}\big[\|\mathcal{O}_t^N\|_{\mathbb{L}^{9}_{\lambda_{1}/8} (\mathcal{I}\times[0,t])}^{9p}\big]\right)^{\frac{1}{p}} \right) \\
		&= C \left( 1 + \left(\mathbb{E}\left[\left(\int_{0}^{t} e^{-\frac{\lambda_{1}(t-s)}{8}} \|S_N(s)P_Nu_0\|_{L^9}^{9}ds\right)^{p}\right]\right)^{\frac{1}{p}} + \left(\mathbb{E}\left[\left(\int_{0}^{t} e^{-\frac{\lambda_{1}(t-s)}{8}} \|\mathcal{O}_s^N\|_{L^9}^{9}ds\right)^{p}\right]\right)^{\frac{1}{p}} \right) \\
		&\leq C + C\left(\mathbb{E}\left[ \int_{0}^{t} e^{-\frac{\lambda_{1}(t-s)}{8}} \|P_Nu_0\|_{L^9}^{9p}ds \right] \left(\int_{0}^{t}e^{-\frac{\lambda_{1}(t-s)}{8}} ds\right)^{p-1}\right)^{\frac{1}{p}} \\
		&\quad + C \left(\mathbb{E}\left[ \int_{0}^{t} e^{-\frac{\lambda_{1}(t-s)}{8}} \|\mathcal{O}_s^N\|_{L^9}^{9p}ds \right] \left(\int_{0}^{t}e^{-\frac{\lambda_{1}(t-s)}{8}} ds\right)^{p-1}\right)^{\frac{1}{p}}  \\
		&\leq C \left( 1 + \left(\int_{0}^{t} e^{-\frac{\lambda_{1}(t-s)}{8}} \mathbb{E}[\|P_Nu_0\|_{L^9}^{9p}]ds\right)^{\frac{1}{p}} + \left(\int_{0}^{t} e^{-\frac{\lambda_{1}(t-s)}{8}} \mathbb{E}[\|\mathcal{O}_s^N\|_{L^9}^{9p}]ds\right)^{\frac{1}{p}} \right) \\
		&\leq C \left( 1 + \|P_Nu_0\|_{L^{9p}(\Omega;L^9)}^9 + \sup_{t\geq0}\|\mathcal{O}_t^N\|_{L^{9p}(\Omega;L^9)}^9 \right). 
	\end{align*} 
	Consequently, 
	\begin{align*}
		&\|u^N(t)\|_{L^p(\Omega;L^\infty)}
		\leq \|\tilde{u}^{N}(t)\|_{L^p(\Omega;L^\infty)} + \|S_{N}(t)P_Nu_0\|_{L^p(\Omega;L^\infty)} + \|\mathcal{O}_t^N\|_{L^p(\Omega;L^\infty)} \\
		&\leq C \left( 1 + \|P_Nu_0\|_{L^{9p}(\Omega;L^9)}^9 + \sup_{t\geq0}\|\mathcal{O}_t^N\|_{L^{9p}(\Omega;L^9)}^9 \right)
		+ \|P_Nu_0\|_{L^p(\Omega;L^\infty)} + \|\mathcal{O}_t^N\|_{L^p(\Omega;L^\infty)}. 
	\end{align*}
	In view of the following regularity properties of $\mathcal{O}_t^N$ (see, e.g., \cite[Lemma 2]{Cui2021}), 
	\begin{equation}\label{regularity ON}
		\sup\limits_{N\in\mathbb{N}, t\geq0} \|\mathcal{O}_t^N\|_{L^p(\Omega;\dot{H}^\beta)} 
		+ \sup\limits_{N\in\mathbb{N}, t\geq0} \|\mathcal{O}_t^N\|_{L^p(\Omega;L^\infty)} < \infty,  \quad \forall p\geq2, 
	\end{equation}
	we get $\sup_{N\in\mathbb{N},t\geq0}\|u^N(t)\|_{L^p(\Omega;L^\infty)}<\infty$. 
	The remaining proof of the lemma is standard, so we omit it. 
\end{proof}

Next, we examine the moment bounds of the Malliavin derivative of $u^N(t)$. 
\begin{Lemma}\label{Le:Malliavin}
	Let Assumptions \ref{Asp:Initial value}--\ref{Asp:dissipative}, and \ref{Asp:additional asp} hold. Then the Malliavin derivative of $u^N(t)$ satisfies 
	\begin{equation*}
		\mathbb{E} \big[\|\mathcal{D}_su^N(t)\|_{\mathcal{L}_2^0}^2\big] 
		\leq C \left( (t-s)^{\beta-1} + 1 \right), \quad 0 \leq s < t. 
	\end{equation*}
\end{Lemma}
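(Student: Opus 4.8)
The plan is to derive a closed linear equation for $\mathcal{D}_s u^N(t)$, isolate the leading (stochastic–convolution) contribution that carries the singular factor $(t-s)^{\beta-1}$, and control the remainder by an energy argument exploiting the dissipativity in Assumption \ref{Asp:dissipative}. First I would apply $\mathcal{D}_s^{\varphi}$ (direction $\varphi\in U_0$) to the mild formula \eqref{mild uN}. Since $u_0$ is $\mathcal{F}_0$-measurable, $\mathcal{D}_s^{\varphi}\big(S_N(t)P_N u_0\big)=0$; by \eqref{Malliavin Ito} applied to the deterministic integrand $S_N(t-\cdot)P_N$ one gets $\mathcal{D}_s^{\varphi}\mathcal{O}_t^N = S_N(t-s)P_N\varphi$ for $s\le t$; and the chain rule \eqref{chain rule} with \eqref{Frechet}, together with $\mathcal{D}_s u^N(r)=0$ for $r<s$, turns the drift term into $\int_s^t S_N(t-r)P_N F'(u^N(r))\mathcal{D}_s^{\varphi}u^N(r)\,dr$. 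This yields, as an identity in $\mathcal{L}_2^0$ for $s\le t$,
\begin{equation*}
\mathcal{D}_s u^N(t) = S_N(t-s)P_N + \int_s^t S_N(t-r)P_N F'(u^N(r))\,\mathcal{D}_s u^N(r)\,dr,
\end{equation*}
and $\mathcal{D}_s u^N(t)=0$ for $s>t$.

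Next I would split $\mathcal{D}_s u^N(t)=S_N(t-s)P_N+R(t)$ and estimate the leading term directly. Inserting $A^{\frac{1-\beta}{2}}A^{\frac{\beta-1}{2}}=I$, using that $Q$ and $P_N$ commute with $A$, the identity $\|B\|_{\mathcal{L}_2^0}=\|BQ^{1/2}\|_{\mathcal{L}_2(H)}$, the smoothing bound \eqref{semigroup1} with $\nu=\frac{1-\beta}{2}$, and the hypothesis $\|A^{\frac{\beta-1}{2}}Q^{1/2}\|_{\mathcal{L}_2(H)}<\infty$, I obtain
\begin{equation*}
\|S_N(t-s)P_N\|_{\mathcal{L}_2^0}^2 \le C(t-s)^{\beta-1}e^{-\lambda_{1}(t-s)},
\end{equation*}
which is exactly the origin of the singular factor.

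The remainder $R$ satisfies $R(s)=0$ and, in differential form, $\frac{d}{dt}R=-A_N R+P_N F'(u^N(t))R+P_N F'(u^N(t))S_N(t-s)P_N$. Testing with $R$ in $\mathcal{L}_2^0$ (that is, differentiating $\sum_k\|R\varphi_k\|^2$ over an orthonormal basis $\{\varphi_k\}$ of $U_0$, justified in the usual way), and using $\langle\eta,A_N\eta\rangle\ge\lambda_{1}\|\eta\|^2$ for $\eta\in H^N$ together with the one-sided bound $\langle\eta,F'(u^N)\eta\rangle\le L_F\|\eta\|^2$ from \eqref{Est:f}, the first two terms contribute $-(\lambda_{1}-L_F)\|R\|_{\mathcal{L}_2^0}^2$. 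The decisive structural point is that the inhomogeneity enters \emph{additively}: by Young's inequality and $\|F'(u^N)\|_{\mathcal{L}(H)}\le C_f(1+\|u^N\|_{L^\infty}^2)$ from \eqref{Est:f}, the source contributes at most $\epsilon\|R\|_{\mathcal{L}_2^0}^2+C_\epsilon\,(1+\|u^N(t)\|_{L^\infty}^2)^2\,\|S_N(t-s)P_N\|_{\mathcal{L}_2^0}^2$. Choosing $\epsilon$ with $\lambda_{1}-L_F-\epsilon>0$ (possible by Assumption \ref{Asp:dissipative}), integrating the differential inequality, taking expectation, and using the uniform moment bound \eqref{regularity uN} with $p=4$ to control $\mathbb{E}[(1+\|u^N(r)\|_{L^\infty}^2)^2]$, I get
\begin{equation*}
\mathbb{E}\big[\|R(t)\|_{\mathcal{L}_2^0}^2\big] \le C\int_s^t e^{-2(\lambda_{1}-L_F-\epsilon)(t-r)}(r-s)^{\beta-1}e^{-\lambda_{1}(r-s)}\,dr \le C,
\end{equation*}
uniformly in $t,s,N$, since the integrable singularity $(r-s)^{\beta-1}$ is convolved against an exponentially decaying kernel. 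Combining via $\|\mathcal{D}_s u^N(t)\|_{\mathcal{L}_2^0}^2\le 2\|S_N(t-s)P_N\|_{\mathcal{L}_2^0}^2+2\|R(t)\|_{\mathcal{L}_2^0}^2$ gives the claimed bound $C\big((t-s)^{\beta-1}+1\big)$.

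I expect the main obstacle to be producing the singular factor while keeping the estimate uniform in both $t$ and $N$. A naive Gronwall argument on the full equation generates the factor $\exp\!\big(\int_s^t C_f(1+\|u^N\|_{L^\infty}^2)\,dr\big)$, which would require unavailable exponential moments of $\|u^N\|_{L^\infty}$ and would ruin time-uniformity. The splitting is precisely the device that moves the random nonlinearity coefficient out of the exponent and into an additive source, so that the polynomial moment bound \eqref{regularity uN} is enough, while the dissipativity $L_F<\lambda_{1}$ supplies the decay making the bound uniform in time.
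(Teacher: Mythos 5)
Your proposal is correct, and its skeleton matches the paper's: the same Malliavin derivative equation for $\mathcal{D}_s u^N(t)$, the same splitting into the semigroup term $S_N(t-s)P_N$ (which produces the singular factor $(t-s)^{\beta-1}$ via \eqref{semigroup1} with $\nu=\tfrac{1-\beta}{2}$ and the trace condition on $Q$) plus a remainder, and the same decisive use of the dissipativity $L_F<\lambda_1$ to make the remainder bound uniform in time. Where you genuinely diverge is in the treatment of the remainder. The paper introduces an auxiliary evolution operator $\Psi(t,r)$ associated with the linearized drift $-A_N+P_NF'(u^N(t))$, proves the uniform bound $\|\Psi(t,r)z\|\leq C\|z\|$ by an energy estimate, and then represents the remainder by variation of constants, estimating the resulting integral pathwise with H\"older's inequality (writing $S(r-s)\psi=A^{\alpha}S(r-s)A^{-\alpha}\psi$ so that the weight $A^{-\alpha}$ lands on the direction $\psi=Q^{1/2}\phi_k$, with $\alpha=\tfrac{1-\beta}{2}$), and only takes expectations and sums over $k$ at the very end. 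You instead bypass the evolution operator altogether: a differential inequality for the $\mathcal{L}_2^0$-energy of the remainder, with Young's inequality moving the random coefficient $(1+\|u^N(t)\|_{L^\infty}^2)^2$ onto the deterministic source $\|S_N(t-s)P_N\|_{\mathcal{L}_2^0}^2$, followed by Gronwall and the moment bound \eqref{regularity uN} with $p=4$. The two are close to being integrated/differentiated forms of one another, but yours is somewhat more streamlined (one fewer auxiliary object, and the convolution $\int_s^t e^{-c(t-r)}(r-s)^{\beta-1}e^{-\lambda_1(r-s)}\,dr\leq \Gamma(\beta)\lambda_1^{-\beta}$ closes the argument immediately), while the paper's route yields a pathwise bound on the remainder before any expectation is taken, a stronger intermediate statement than the lemma requires. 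Your diagnosis of why a naive Gronwall on the full equation fails (it would put $\int_s^t C_f(1+\|u^N\|_{L^\infty}^2)dr$ in an exponent, demanding unavailable exponential moments) is exactly the obstruction both proofs are designed to avoid. One technical point to keep honest in a write-up: justify the term-by-term differentiation of $\sum_k\|R\,Q^{1/2}\phi_k\|^2$, or simply run your energy estimate direction-by-direction and sum at the end, as the paper does.
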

\begin{proof}
	By differentiating \eqref{mild uN} along the direction $\psi\in U_0$, then using \eqref{chain rule} and \eqref{Malliavin Ito}, we get 
	\begin{equation}\label{Malliavin of uN}
		\mathcal{D}_s^{\psi} u^N(t) = \int_{s}^{t} S_{N}(t-r) P_N F'(u^{N}(r)) \mathcal{D}_s^{\psi} u^N(r) dr + S_{N}(t-s) P_N \psi, \quad 0 \leq s \leq t. 
	\end{equation}
	We introduce $\Upsilon_s^N(t,\psi)$ to denote 
	\begin{equation}\label{Upsilon}
		\Upsilon_s^N(t,\psi) := \mathcal{D}_s^{\psi} u^N(t) - S_{N}(t-s) P_N \psi 
		= \int_{s}^{t} S_{N}(t-r) P_N F'(u^{N}(r)) \mathcal{D}_s^{\psi} u^N(r) dr, 
	\end{equation}
	which solves 
	\begin{align*}
		\frac{d}{dt} \Upsilon_s^N(t,\psi) = \left( -A_N + P_N F'(u^{N}(t)) \right) \Upsilon_s^N(t,\psi) + P_N F'(u^{N}(t)) S_{N}(t-s) P_N \psi, \quad
		\Upsilon_s^N(s,\psi)= 0. 
	\end{align*}
	By introducing an evolution operator $\Psi(t,r)$  with respect to the following linear equation
	\begin{equation*}
		\frac{d}{dt} \Psi(t,r)z = -A_N \Psi(t,r)z + P_N F'(u^{N}(t)) \Psi(t,r)z, \quad \Psi(r,r)z=z, 
	\end{equation*}
	$\Upsilon_s^N(t,\psi)$ can be formulated as
	\begin{equation}\label{Est:Upsilon1}
		\Upsilon_s^N(t,\psi) = \int_{s}^{t} \Psi(t,r) P_N F'(u^{N}(r)) S_{N}(r-s) P_N \psi dr. 
	\end{equation}
	Next we estimate $\Psi(t,r)z$. Through the integration by parts, \eqref{Est:F1} and Poincar\'e's inequality, we obtain 
	\begin{align*}
		\frac{1}{2} \frac{d}{dt} \|\Psi(t,r)z\|^2 &= \langle -A_N \Psi(t,r)z , \Psi(t,r)z \rangle + \langle F'(u^{N}(t)) \Psi(t,r)z , \Psi(t,r)z \rangle \\
		&\leq -\|\Psi(t,r)z\|_{\dot{H}^1}^2 + L_F \|\Psi(t,r)z\|^2 
		\leq (L_F-\lambda_{1}) \|\Psi(t,r)z\|^2.  
	\end{align*}
	Under Assumption \ref{Asp:dissipative}, i.e., $L_F<\lambda_{1}$, it follows from Gronwall's inequality that 
	\begin{equation*}
		\|\Psi(t,r)z\| \leq C\|z\|, 
	\end{equation*}
	where the constant $C$ is independent of $t$ and $r$. 
	This, combined with \eqref{Est:Upsilon1}, \eqref{Est:F2}, \eqref{semigroup1} and H\"older's inequality, yields that for $\alpha<\frac{1}{2}$ and $s<t$, 
	\begin{align*}
		&\|\Upsilon_s^N(t,\psi)\| 
		\leq \int_{s}^{t} \left\| \Psi(t,r) P_N F'(u^{N}(r))  S_{N}(r-s) P_N \psi \right\| dr \\ 
		&\leq C \int_{s}^{t} \left\| P_N F'(u^{N}(r))  S_{N}(r-s) P_N \psi \right\| dr \\
		&\leq C C_F \int_{s}^{t} \left(\|u^{N}(r)\|_{L^\infty}^2 + 1\right)  \left\| A^\alpha S(r-s) A^{-\alpha}\psi \right\| dr \\
		&\leq C \int_{s}^{t} \left(\|u^{N}(r)\|_{L^\infty}^2 + 1\right)  (r-s)^{-\alpha} e^{-\frac{\lambda_1}{2}(r-s)} \|A^{-\alpha}\psi\| dr \\
		&\leq C \left( \int_{s}^{t} e^{-\frac{\lambda_1}{2}(r-s)} \left(\|u^{N}(r)\|_{L^\infty}^2 + 1\right)^2 dr \right)^{\frac{1}{2}}
		\left( \int_{s}^{t} e^{-\frac{\lambda_1}{2}(r-s)} (r-s)^{-2\alpha} dr \right)^{\frac{1}{2}} \|A^{-\alpha}\psi\| \\
		&\leq C \sqrt{\Gamma(1-2\alpha)} (\tfrac{2}{\lambda_{1}})^{\frac{1}{2}-\alpha} \left( \int_{s}^{t} e^{-\frac{\lambda_1}{2}(r-s)} \left(\|u^{N}(r)\|_{L^\infty}^4 + 1\right) dr \right)^{\frac{1}{2}} \|A^{-\alpha}\psi\|, 
	\end{align*}
	which, together with \eqref{Upsilon} and \eqref{semigroup1}, leads to 
	\begin{align*}
		&\|\mathcal{D}_s^{\psi}u^N(t)\| 
		\leq \|\Upsilon_s^N(t,\psi)\| + \|A^{\alpha}S(t-s)A^{-\alpha}\psi\| \\
		&\leq C \left( \int_{s}^{t} e^{-\frac{\lambda_1}{2}(r-s)} \left(\|u^{N}(r)\|_{L^\infty}^4+1\right) dr \right)^{\frac{1}{2}} \|A^{-\alpha}\psi\| + C e^{-\frac{\lambda_1}{2}(t-s)} (t-s)^{-\alpha} \|A^{-\alpha}\psi\|. 
	\end{align*}
	Finally, by taking $\psi = Q^{\frac{1}{2}} \phi_k$ and $\alpha = \frac{1 - \beta}{2} \in \left[ 0, \frac{1}{2} \right)$, and in view of \eqref{regularity uN}, we obtain
	\begin{align*}
		&\mathbb{E}\left[ \| \mathcal{D}_{s} u^N(t) \|_{\mathcal{L}_2^0}^2 \right] = \sum_{k=1}^{\infty} \mathbb{E}\left[ \| \mathcal{D}_{s}^{Q^{\frac{1}{2}} \phi_k} u^N(t) \|^2 \right] \\
		&\leq C \sum_{k=1}^{\infty} \int_{s}^{t} e^{-\frac{\lambda_1}{2} (r - s)} \left( \| u^N(r) \|_{L^4(\Omega; L^\infty)}^4 + 1 \right) \, dr \, \| A^{\frac{\beta - 1}{2}} Q^{\frac{1}{2}} \phi_k \|^2 \\
		& \quad + C \sum_{k=1}^{\infty} e^{-\lambda_1 (t - s)} (t - s)^{\beta - 1} \| A^{\frac{\beta - 1}{2}} Q^{\frac{1}{2}} \phi_k \|^2 \\
		&\leq C \frac{2}{\lambda_1} \left( 1 - e^{-\frac{\lambda_1}{2} (t - s)} \right) \| A^{\frac{\beta - 1}{2}} Q^{\frac{1}{2}} \|_{\mathcal{L}_2}^2 \left( \sup_{t \geq 0} \| u^N(t) \|_{L^4(\Omega; L^\infty)}^4 + 1 \right) + C (t - s)^{\beta - 1} \| A^{\frac{\beta - 1}{2}} Q^{\frac{1}{2}} \|_{\mathcal{L}_2}^2 \\
		& \leq C \left( (t - s)^{\beta - 1} + 1 \right) \| A^{\frac{\beta - 1}{2}} Q^{\frac{1}{2}} \|_{\mathcal{L}_2}^2,
	\end{align*}
	where the constant $C$ is independent of $t$ and $s$.
	
\end{proof}

Next, the following lemma gives the estimate of the negative norm of $F'$, which is used to obtain the weak approximation error.

\begin{Lemma}
	For the nonlinear term $F$ satisfying Assumption \ref{Asp:f}, and for $\theta \in (0, 1]$, $\eta \geq 1$, $\forall \zeta \in \dot{H}^{\theta} \cap L^\infty(\mathcal{I})$, $\psi \in L^\infty(\mathcal{I})$, it holds
	\begin{equation} \label{Est:negative norm}
		\| F'(\zeta) \psi \|_{\dot{H}^{-\eta}} \leq C \left( \max \left\{ \| \zeta \|_{L^\infty}, \| \zeta \|_{\dot{H}^{\theta}} \right\}^2 + 1 \right) \| \psi \|_{\dot{H}^{-\theta}}.
	\end{equation}
\end{Lemma}

\begin{proof}
	Since the estimate in \eqref{Est:negative norm} for $\theta \in (0, 1)$ has been proven in \cite[Lemma 3.4]{Cai2021}, we focus here on the case where $\theta = 1$. It is well known that the norms in $\dot{H}^1$ and $H_0^1(\mathcal{I})$ are equivalent, meaning that
	\begin{align*}
		&\| F'(\zeta) \psi \|_{\dot{H}^1}
		\leq C \left\| \left( -3 a_3 \zeta^2(x) + 2 a_2 \zeta(x) + a_1 \right) \psi(x) \right\|_{H_0^1} \\
		&\leq C \left\| \left( -6 a_3 \zeta(x) \zeta'(x) + 2 a_2 \zeta'(x) \right) \psi(x) \right\| + C \left\| \left( -3 a_3 \zeta^2(x) + 2 a_2 \zeta(x) + a_1 \right) \psi'(x) \right\| \\
		&\leq C \| \zeta(x) \zeta'(x) \psi(x) \| + C \| \zeta'(x) \psi(x) \| + C \| \zeta^2(x) \psi'(x) \| + C \| \zeta(x) \psi'(x) \| + C \| \psi'(x) \| \\
		&\leq C \| \zeta \|_{L^\infty} \| \zeta \|_{\dot{H}^1} \| \psi \|_{L^\infty} + C \| \zeta \|_{\dot{H}^1} \| \psi \|_{L^\infty} + C \| \zeta \|_{L^\infty}^2 \| \psi \|_{\dot{H}^1} 
		+ C \| \zeta \|_{L^\infty} \| \psi \|_{\dot{H}^1} + C \| \psi \|_{\dot{H}^1} \\
		&\leq C \left( \max \left\{ \| \zeta \|_{L^\infty}^2, \| \zeta \|_{\dot{H}^1}^2 \right\} + 1 \right) \left( \| \psi \|_{\dot{H}^1} + \| \psi \|_{L^\infty} \right).
	\end{align*}
	Accordingly, for $\eta \geq 1$, $\zeta \in \dot{H}^1 \cap L^\infty(\mathcal{I})$ and $\psi \in L^\infty(\mathcal{I})$, we have
	\begin{align*}
		&\| F'(\zeta) \psi \|_{\dot{H}^{-\eta}} 
		= \sup_{\varphi \in H} \frac{\big| \langle A^{-\frac{\eta}{2}} F'(\zeta) \psi, \varphi \rangle \big|}{\|\varphi\|}
		= \sup_{\varphi \in H} \frac{\big| \langle A^{-\frac{1}{2}} \psi, A^{\frac{1}{2}} F'(\zeta) A^{-\frac{\eta}{2}} \varphi \rangle \big|}{\|\varphi\|} \\
		&\leq C \sup_{\varphi \in H} \frac{\|\psi\|_{\dot{H}^{-1}}}{\|\varphi\|} \left( \max \left\{ \| \zeta \|_{L^\infty}^2, \| \zeta \|_{\dot{H}^1}^2 \right\} + 1 \right) 
		\left( \| A^{-\frac{\eta}{2}} \varphi \|_{\dot{H}^1} + \| A^{-\frac{\eta}{2}} \varphi \|_{L^\infty} \right) \\
		&\leq C \left( \max \left\{ \| \zeta \|_{L^\infty}^2, \| \zeta \|_{\dot{H}^1}^2 \right\} + 1 \right) \|\psi\|_{\dot{H}^{-1}},
	\end{align*}
	which completes the proof.
\end{proof}

\begin{theorem}\label{Th:semi-discrete order}
	Under Assumptions \ref{Asp:Initial value}--\ref{Asp:dissipative}, \ref{Asp:additional asp}, for any test functions $\Phi\in C_b^2(H;\mathbb{R})$, $\beta\in(0,1]$ and for any $T>0$, there exists a constant $C>0$ independent of $N$ and $T$, such that 
	\begin{equation*}
		\left| \mathbb{E}[\Phi(u(T))] - \mathbb{E}[\Phi(u^N(T))] \right|
		\leq C (1+T^{-\beta}) \lambda_{N+1}^{-\beta}. 
	\end{equation*}
\end{theorem}
\begin{proof}
	By denoting $\bar{u}(t) := u(t) - \mathcal{O}_t$ and $\bar{u}^N(t) := u^N(t) - \mathcal{O}^N_t$, we have
	\begin{align*}
		&\quad \mathbb{E}[\Phi(u(T))] - \mathbb{E}[\Phi(u^N(T))] \\
		&= \left( \mathbb{E}[\Phi(\bar{u}(T) + \mathcal{O}_T)] - \mathbb{E}[\Phi(\bar{u}^N(T) + \mathcal{O}_T)] \right) 
		+ \left( \mathbb{E}[\Phi(\bar{u}^N(T) + \mathcal{O}_T)] - \mathbb{E}[\Phi(\bar{u}^N(T) + \mathcal{O}^N_T)] \right) \\
		&=:  I_1 + I_2.
	\end{align*}
	
	We first estimate $I_2$. By the second-order Taylor expansion at $u^N(T)$, the boundedness of $\Phi''$, and \eqref{projection}, we have
	\begin{align*}
		I_2 &= \mathbb{E} \big[ \Phi'(u^N(T)) (\mathcal{O}_T - \mathcal{O}^N_T) 
		+ \int_{0}^{1} \Phi''( u^N(T) + \lambda(\mathcal{O}_T - \mathcal{O}^N_T) ) (\mathcal{O}_T - \mathcal{O}^N_T, \mathcal{O}_T - \mathcal{O}^N_T) (1 - \lambda) d\lambda \big] \\
		&\leq \left| \mathbb{E} \left[ \Phi'(u^N(T)) (\mathcal{O}_T - \mathcal{O}^N_T) \right] \right|  + C \mathbb{E} \left[ \|\mathcal{O}_T - \mathcal{O}^N_T\|^2 \right] \\
		&\leq \left| \mathbb{E} \left[ \Phi'(u^N(T)) (I - P_N) \mathcal{O}_T \right] \right| + C \lambda_{N+1}^{-\beta} \|\mathcal{O}_T\|_{L^2(\Omega; \dot{H}^{\beta})}^2.
	\end{align*}
	Through the Malliavin integration by parts \eqref{Malliavin integral by parts}, the chain rule \eqref{chain rule}, and \eqref{Malliavin of uN}, we know that
	\begin{align*}
		& \left| \mathbb{E} \left[ \Phi'(u^N(T)) (I - P_N) \mathcal{O}_T \right] \right| \\
		= \ & \left| \mathbb{E} \left[ \Phi'(u^N(T)) \int_{0}^{T} S(T - s)(I - P_N) \, dW(s) \right] \right| \\
		= \ & \left| \mathbb{E} \left[ \int_{0}^{T} \left\langle (I - P_N) S(T - s), \mathcal{D}_s \Phi'(u^N(T)) \right\rangle_{\mathcal{L}_2^0} \, ds \right] \right| \\
		= \ & \left| \mathbb{E} \left[ \int_{0}^{T} \left\langle (I - P_N) S(T - s), \Phi''(u^N(T)) \mathcal{D}_s u^N(T) \right\rangle_{\mathcal{L}_2^0} \, ds \right] \right| \\
		\leq \ & \left| \mathbb{E} \left[ \int_{0}^{T} \left\langle (I - P_N) S(T - s), \Phi''(u^N(T)) S_N(T - s) \right\rangle_{\mathcal{L}_2^0} \, ds \right] \right| \\
		&+ \left| \mathbb{E} \left[ \int_{0}^{T} \left\langle (I - P_N) S(T - s), \Phi''(u^N(T)) \int_{s}^{T} S(T - r) P_N F'(u^N(r)) \mathcal{D}_s u^N(r) \, dr \right\rangle_{\mathcal{L}_2^0} \, ds \right] \right| \\
		=: \ & J_1 + J_2.
	\end{align*}
	For $J_1$, since $A$ commutes with $Q$, by using the Cauchy--Schwarz inequality, and in view of the boundedness of $\Phi''$, we get 
	\begin{align*}
		J_1 
		&= \left| \mathbb{E} \left[ \sum_{k=1}^{\infty} \int_{0}^{T} \left\langle (I-P_N)S(T-s) Q^{\frac{1}{2}}\phi_k , \Phi''(u^N(T)) S(T-s) P_N Q^{\frac{1}{2}}\phi_k \right\rangle ds \right] \right| \\
		&= \left| \mathbb{E} \left[ \sum_{k=N+1}^{\infty} \int_{0}^{T} e^{-2\lambda_{k}(T-s)} \left\langle  Q^{\frac{1}{2}}\phi_k , \Phi''(u^N(T)) Q^{\frac{1}{2}}\phi_k \right\rangle ds \right] \right| \\
		&= \left| \mathbb{E} \left[ \sum_{k=N+1}^{\infty} \frac{1}{2\lambda_{k}} (1-e^{-2\lambda_{k}T}) \left\langle Q^{\frac{1}{2}}\phi_k , \Phi''(u^N(T)) Q^{\frac{1}{2}}\phi_k \right\rangle \right] \right| \\
		&\leq C \mathbb{E} \left[ \sum_{k=N+1}^{\infty} \lambda_{k}^{-1} \|Q^{\frac{1}{2}}\phi_k\| \|\Phi''(u^N(T))Q^{\frac{1}{2}}\phi_k\| \right] \\
		&
		\leq C \lambda_{N+1}^{-\beta} \|A^{\frac{\beta-1}{2}}Q^{\frac{1}{2}}\|_{\mathcal{L}_2(H)}^2. 
	\end{align*}
	For $J_2$, through \eqref{semigroup1}, \eqref{Est:F2}, H\"older's inequality, \eqref{regularity uN} and Lemma \ref{Le:Malliavin}, we have 
	\begin{align*}
		J_2 &\leq \int_{0}^{T} \|(I-P_N)S(T-s)\|_{\mathcal{L}_2^0} 
		\left( \int_{s}^{T} \mathbb{E} \left[ \left\| S(T-r) P_N F'(u^{N}(r)) \mathcal{D}_s u^N(r) \right\|_{\mathcal{L}_2^0} \right] dr  \right) ds \\
		&\leq C \int_{0}^{T} \left( \sum_{k=N+1}^{\infty} \lambda_{k}^{-2\beta} (T-s)^{-(\beta+1)} e^{-\lambda_{1}(T-s)} \|A^{\frac{\beta-1}{2}}Q^{\frac{1}{2}}\phi_k\|^2 \right)^{\frac{1}{2}} \\
		&\qquad \quad \left( \int_{s}^{T} \mathbb{E} \left[ \left(\sum_{k=1}^{\infty} \left\{ C_F\left(1+\|u^{N}(r)\|_{L^\infty}^2\right) \|\mathcal{D}_s^{Q^{\frac{1}{2}}\phi_k} u^N(r)\| \right\}^2 \right)^{\frac{1}{2}} \right] dr  \right) ds \\
		&\leq C \lambda_{N+1}^{-\beta} \|A^{\frac{\beta-1}{2}}Q^{\frac{1}{2}}\|_{\mathcal{L}_2(H)} \int_{0}^{T} (T-s)^{-\frac{\beta+1}{2}} e^{-\frac{\lambda_{1}}{2}(T-s)} 
		\left( \int_{s}^{T} \left( \mathbb{E}\big[\left\|\mathcal{D}_s u^N(r)\right\|_{\mathcal{L}_2^0}^2 \big]  \right)^{\frac{1}{2}}  dr  \right) ds \\
		&\leq C \lambda_{N+1}^{-\beta} \int_{0}^{T} \left(1+(T-s)^{\frac{1-\beta}{2}}\right) e^{-\frac{\lambda_{1}}{2}(T-s)} ds 
		\leq C \lambda_{N+1}^{-\beta}. 
	\end{align*}
	So we obtain $|I_2| \leq C \lambda_{N+1}^{-\beta}$.

	We then estimate $I_1$. By the first-order Taylor expansion at $\xi$ (where $\xi$ is between $\bar{u}(T)$ and $\bar{u}^N(T)$) and the boundedness of $\Phi'$, we obtain 
	\begin{align*}
		|I_1| &\leq \left| \mathbb{E} \left[ \Phi'(\xi)(\bar{u}(T)-\bar{u}^N(T)) \right] \right| 
		\leq C \mathbb{E}[\|\bar{u}(T)-\bar{u}^N(T)\|] 
		\leq C \mathbb{E}[\|e_1(T)\|] 
		+ C \mathbb{E}[\|e_2(T)\|], 
	\end{align*}
	where $e_1(t):=\bar{u}(t)-P_N\bar{u}(t)$ and $e_2(t):=\bar{u}^N(t)-P_N \bar{u}(t)$. 
	For $e_1(T)$, from \eqref{projection} we know that 
	\begin{align*}
		\mathbb{E}[\|e_1(T)\|] 
		= \mathbb{E}[\|(I-P_N)\bar{u}(T)\|]
		\leq \lambda_{N+1}^{-\beta} \mathbb{E}[\|\bar{u}(T)\|_{\dot{H}^{2\beta}}]
		\leq \lambda_{N+1}^{-\beta}  \|\bar{u}(T)\|_{L^2(\Omega;\dot{H}^{2\beta})}. 
	\end{align*}
	By using \eqref{semigroup1} with $\nu=\beta$, \eqref{semigroup_sharp} with $\rho=1$, \eqref{Holder u}, we have 
	\begin{align*}
		&\|\bar{u}(T)\|_{L^2(\Omega;\dot{H}^{2\beta})}\\
		\leq \ & \|A^{\beta}S(T)u_0\|_{L^2(\Omega;H)} + \left\| \int_{0}^{T} A^{\beta}S(T-s) F(u(T)) ds \right\|_{L^2(\Omega;H)} \\
		& + \int_{0}^{T} \left\| A^{\beta}S(T-s) (F(u(T))-F(u(s))) \right\|_{L^2(\Omega;H)} ds\\
		\leq \ & C T^{-\beta} \|u_0\|_{L^2(\Omega;H)} 
		+ C \|F(u(T))\|_{L^2(\Omega;H)} 
		+ C \int_{0}^{T} (T-s)^{-\frac{\beta}{2}} e^{-\frac{\lambda_{1}}{2}(T-s)}  ds \\
		\leq \ & C T^{-\beta} \|u_0\|_{L^2(\Omega;H)} 
		+ C \Big( 1+\sup\limits_{t\geq0}\|u(t)\|_{L^6(\Omega; L^6)}^3 \Big)
		+ C \Gamma(1-\tfrac{\beta}{2}) (\tfrac{2}{\lambda_{1}})^{1-\frac{\beta}{2}}, 
	\end{align*}
	which implies that 
	\begin{align*}
		\mathbb{E}[\|e_1(T)\|] 
		\leq C (1+T^{-\beta}) \lambda_{N+1}^{-\beta}. 
	\end{align*}
	For $e_2(t)=\bar{u}^N(t)-P_N \bar{u}(t)$, which solves 
	\begin{align*}
		\frac{d}{dt} e_2(t) = -A_N e_2(t) + P_N \left( F(\bar{u}^N(t)+\mathcal{O}^N_t) - F(\bar{u}(t)+\mathcal{O}_t) \right), \quad e_2(0)=0, 
	\end{align*}
	through the integration by parts, \eqref{Est:F1}, Young's inequality and Poincar\'e's inequality, we get
	\begin{align*}
		&\frac{d}{dt} \|e_2(t)\|^2 \\
		= & -2\|e_2(t)\|_{\dot{H}^1}^2 + 2 \left\langle e_2(t) , F(\bar{u}^N(t)+\mathcal{O}^N_t) - F(P_N\bar{u}(t)+\mathcal{O}^N_t) \right\rangle \\
		& + 2 \left\langle e_2(t) ,  F(P_N\bar{u}(t)+\mathcal{O}^N_t) - F(\bar{u}(t)+\mathcal{O}_t) \right\rangle \\
		\leq & -2\|e_2(t)\|_{\dot{H}^1}^2 
		+ 2 L_F \|e_2(t)\|^2 
		+ \tfrac{\lambda_{1}-L_F}{\lambda_{1}} \|e_2(t)\|_{\dot{H}^1}^2 + \tfrac{\lambda_{1}}{\lambda_{1}-L_F} \|A^{-\frac{1}{2}} (F(P_Nu(t))-F(u(t))) \|^2 \\
		\leq & -(\lambda_{1}-L_F) \|e_2(t)\|^2 
		+ \tfrac{\lambda_{1}}{\lambda_{1}-L_F} \|F(P_Nu(t))-F(u(t))\|_{\dot{H}^{-1}}^2. 
	\end{align*}
	Note that $\lambda_{1}-L_F>0$ ensured by Assumption \ref{Asp:dissipative}, then by Gronwall's inequality, we obtain 
	\begin{align*}
		\|e_2(t)\|^2 
		\leq \tfrac{\lambda_{1}}{\lambda_{1}-L_F} \int_{0}^{t} e^{-(\lambda_{1}-L_F)(t-s)} 
		\|F(P_Nu(s))-F(u(s))\|_{\dot{H}^{-1}}^2 ds. 
	\end{align*}
	It follows from Taylor's expansion and \eqref{Est:negative norm} that 
	\begin{align*}
		&\|F(P_Nu(s))-F(u(s))\|_{\dot{H}^{-1}}^2 
		\leq \int_{0}^{1} \left\| F'\left(u(s)+\lambda(P_Nu(s)-u(s))\right) (P_Nu(s)-u(s)) \right\|_{\dot{H}^{-1}}^2 d\lambda \\
		&\leq C \int_{0}^{1} \left(\max\left\{ \|u(s)\|_{L^\infty}^4, \|P_Nu(s)\|_{L^\infty}^4, \|u(s)\|_{\dot{H}^{\theta}}^4 \right\} + 1 \right) \|P_Nu(s)-u(s)\|_{\dot{H}^{-\theta}}^2 d\lambda. 
	\end{align*}
	By taking \(\theta = \beta\) in the above estimate, and then applying H\"older's inequality, \eqref{projection} with \(s = \beta\), \eqref{regularity u}, and \eqref{regularity uN}, we obtain
	\begin{align*}
		&\mathbb{E} \left[\|F(P_Nu(s)) - F(u(s))\|_{\dot{H}^{-1}}^2\right] \\
		\leq \ & C \left( \mathbb{E} \left[ \left( \max \left\{ \|u(s)\|_{L^\infty}^8, \|P_Nu(s)\|_{L^\infty}^8, \|u(s)\|_{\dot{H}^{\beta}}^8 \right\} + 1 \right) \right] \right)^{\frac{1}{2}}
		\left( \mathbb{E} \left[ \|P_Nu(s) - u(s)\|_{\dot{H}^{-\beta}}^4 \right] \right)^{\frac{1}{2}} \\
		\leq \ & C \sup_{t \geq 0} \left( \|u(t)\|_{L^8(\Omega; L^\infty)}^4 + \|P_Nu(t)\|_{L^8(\Omega; L^\infty)}^4 + \|u(t)\|_{L^8(\Omega; \dot{H}^{\beta})}^4 + 1 \right) 
		\lambda_{N+1}^{-2\beta} \sup_{t \geq 0} \|u(t)\|_{L^4(\Omega; \dot{H}^{\beta})}^2 \\
		\leq \ & C \lambda_{N+1}^{-2\beta},
	\end{align*}
	which implies that
	\begin{align*}
		\mathbb{E}[\|e_2(T)\|] &\leq \left( \mathbb{E}[\|e_2(T)\|^2] \right)^{\frac{1}{2}} 
		\leq \Big( \tfrac{\lambda_1}{\lambda_1 - L_F} \int_0^t e^{-(\lambda_1 - L_F)(t-s)} \mathbb{E} [\|F(P_Nu(s)) - F(u(s))\|_{\dot{H}^{-1}}^2 ] ds \Big)^{\frac{1}{2}} \\
		&\leq C \lambda_{N+1}^{-\beta} \Big( \tfrac{\lambda_1}{\lambda_1 - L_F} \int_0^t e^{-(\lambda_1 - L_F)(t-s)} ds \Big)^{\frac{1}{2}} \leq C \lambda_{N+1}^{-\beta}.
	\end{align*}
	Finally, combining all the above estimates yields the desired conclusion.
\end{proof}

\section{Error estimate for the full discretization}\label{Sec:Full discrete}
By $\tau$ we denote the temporal step size. In the following we suppose that $\tau\in(0,\tau_0]$, where $\tau_0$ is an arbitrary parameter. 
Once the value of $\tau$ is fixed, by $t_k = k \tau$, $k \in \mathbb{N}\cup\{0\}$, we denote the time grid-points. 
In order to discretize \eqref{mild uN} in the
time direction, we use tamed accelerated exponential Euler method to construct the fully discrete scheme: $u^{N}_0 = P_N u_0$ and for $k \in \mathbb{N}\cup\{0\}$, 
\begin{equation}\label{scheme1}
	u^{N}_{k+1} = S_N(\tau)u^{N}_{k}
	+ \frac{A_N^{-1}(I-S_N(\tau))F_N(u^{N}_{k})}{ \ 1 + \tau^{\beta}\|u^{N}_{k}\|_{L^\infty}^6 + \tau^{\beta}\|u^{N}_{k}\|_{\dot{H}^{\beta}}^6 \ }
	+ \int_{t_k}^{t_{k+1}} S_N(t_{k+1}-s) P_N dW(s). 
\end{equation}

\subsection{Boundedness of moments of numerical solutions}
By introducing $\kappa(t) := \tau \lfloor t/\tau \rfloor$ for $t\geq0$, where $\lfloor \cdot \rfloor$ is the floor function, namely, 
\begin{align*}
	\kappa(t)=t_k=k\tau, \quad \text{for } t \in [t_k,t_{k+1}), \  k \in \mathbb{N}\cup\{0\}, 
\end{align*}
the fully discrete scheme \eqref{scheme1} can be reformulated as 
\begin{equation}\label{scheme2}
	u^{N,\tau}_{t} = S_N(t) u^{N}_{0} 
	+ \int_{0}^{t} G(s) S_N(t-s)F_N(u^{N,\tau}_{\kappa(s)}) ds 
	+ \mathcal{O}_t^N, 
\end{equation}
where $u_0^N=P_Nu_0$, and $G(t)$ is a step function given by
\begin{equation}\label{G(t)}
	G(t):=\frac{1}{1+\tau^{\beta}\|u^{N,\tau}_{\kappa(t)}\|_{L^\infty}^6+\tau^{\beta}\|u^{N,\tau}_{\kappa(t)}\|_{\dot{H}^{\beta}}^6}.  
\end{equation}
Obviously, $u^{N,\tau}_{t_k}=u^{N}_k$ for any $k\in\mathbb{N}\cup\{0\}$, where $u^{N,\tau}_{t_k}$ and $u^{N}_k$ are given by \eqref{scheme2} and \eqref{scheme1}, respectively.

Next we give the boundedness of moments of fully discrete solutions. 
\begin{theorem}\label{Th:moment bound v}
	Under Assumptions \ref{Asp:Initial value}--\ref{Asp:f}, \ref{Asp:additional asp}, for $p \geq 2$ and $\beta \in (0,1]$, there exists a constant $C(u_0,p,Q,\tau_0)>0$, such that 
	\begin{equation}\label{regularity v}
		\sup\limits_{N\in\mathbb{N}, \tau\in(0,\tau_0], t\geq0} \|u^{N,\tau}_{t}\|_{L^p(\Omega;L^\infty)} \leq C(u_0,p,Q,\tau_0)
		< \infty. 
	\end{equation}
\end{theorem}
\begin{proof}
	Denote $\bar{u}^{N,\tau}_{t} := u^{N,\tau}_{t}-\mathcal{O}_t^N$, then $\bar{u}^{N,\tau}_{t}$ solves 
	\begin{equation*}
		\frac{d}{dt} \bar{u}^{N,\tau}_{t} = -A_N\bar{u}^{N,\tau}_{t} + G(t)  F_N(u^{N,\tau}_{\kappa(t)}), \quad \bar{u}^{N,\tau}_0=u^{N}_0. 
	\end{equation*}
	Through the integration by parts, Young's inequality and properties of the nonlinear term $F$, we have 
	\begin{equation}\label{Est:vbar L2}
		\begin{aligned}
			\frac{1}{2} \frac{d}{dt} \|\bar{u}^{N,\tau}_{t}\|^2 
			&= -\|\bar{u}^{N,\tau}_{t}\|_{\dot{H}^1}^2 + G(t) \langle \bar{u}^{N,\tau}_{t} , F(\bar{u}^{N,\tau}_{t}+\mathcal{O}_t^N) \rangle 
			- G(t) \langle \bar{u}^{N,\tau}_{t} ,  F(u^{N,\tau}_{t})-F(u^{N,\tau}_{\kappa(t)}) \rangle \\
			&\leq -\|\bar{u}^{N,\tau}_{t}\|_{\dot{H}^1}^2 + G(t) \left( -(a_3-\epsilon)\|\bar{u}^{N,\tau}_{t}\|_{L^4}^4 + C(\epsilon) (\|\mathcal{O}_t^N\|_{L^4}^4+1) \right) \\
			&\quad + \frac{1}{2}\|\bar{u}^{N,\tau}_{t}\|^2 + \frac{1}{2}\left(G(t) \|F(u^{N,\tau}_{t})-F(u^{N,\tau}_{\kappa(t)})\|\right)^2. 
		\end{aligned}
	\end{equation}
	From 
	\begin{equation*}
		u^{N,\tau}_{t} = S_N(t-\kappa(t)) u^{N,\tau}_{\kappa(t)} 
		+ \int_{\kappa(t)}^{t} G(s)S_N(t-s)F_N(u^{N,\tau}_{\kappa(s)}) ds + \mathcal{O}_t^N - S_N(t-\kappa(t)) \mathcal{O}_{\kappa(t)}^N, 
	\end{equation*}
	and $\|S(t)u\|_{L^\infty}\leq C\|u\|_{L^\infty}$, \eqref{Est:PN S 1} with $\nu=0$ and $\tau^\beta\|u^{N,\tau}_{\kappa(t)}\|_{L^\infty}^6 \geq \tau^{\frac{\beta}{2}}\|u^{N,\tau}_{\kappa(t)}\|_{L^\infty}^3-\frac{1}{4}$, we obtain  
	\begin{equation}\label{Est:vbar L8}
		\begin{aligned}
			\|u^{N,\tau}_{t}\|_{L^{\infty}} 
			&\leq C\|u^{N,\tau}_{\kappa(t)}\|_{L^{\infty}} 
			+ C \int_{\kappa(t)}^{t} G(\kappa(t)) (t-s)^{-\frac{1}{4}} \|F(u^{N,\tau}_{\kappa(t)})\| ds + C\|\mathcal{O}_t^N\|_{L^{\infty}} + C\|\mathcal{O}_{\kappa(t)}^N\|_{L^{\infty}} \\  
			&\leq C \|u^{N,\tau}_{\kappa(t)}\|_{L^{\infty}} 
			+ \frac{C\tau^{\frac{3}{4}} (\|u^{N,\tau}_{\kappa(t)}\|_{L^{6}}^3 + 1)}{1+\tau^{\beta}\|u^{N,\tau}_{\kappa(t)}\|_{L^\infty}^6+\tau^{\beta}\|u^{N,\tau}_{\kappa(t)}\|_{\dot{H}^{\beta}}^6} + C\|\mathcal{O}_t^N\|_{L^{\infty}} + C\|\mathcal{O}_{\kappa(t)}^N\|_{L^{\infty}} \\
			&\leq C \|u^{N,\tau}_{\kappa(t)}\|_{L^{\infty}} 
			+ \frac{C\tau^{\frac{3}{4}} (\|u^{N,\tau}_{\kappa(t)}\|_{L^{6}}^3 + 1)}{1+\tau^{\frac{\beta}{2}}\|u^{N,\tau}_{\kappa(t)}\|_{L^\infty}^3-\frac{1}{4}} + C\|\mathcal{O}_t^N\|_{L^{\infty}} + C\|\mathcal{O}_{\kappa(t)}^N\|_{L^{\infty}} \\
			&\leq C \|u^{N,\tau}_{\kappa(t)}\|_{L^{\infty}} + C \tau^{\frac{3}{4}-\frac{\beta}{2}} + C\|\mathcal{O}_t^N\|_{L^{\infty}} + C\|\mathcal{O}_{\kappa(t)}^N\|_{L^{\infty}}.\end{aligned}
	\end{equation}
	Furthermore, by \eqref{semigroup2} with $\rho=\frac{\beta}{2}$ we have 
	\begin{equation}\label{Est:vbar Holder}
		\begin{aligned}
			\|u^{N,\tau}_{t}-u^{N,\tau}_{\kappa(t)}\| 
			&\leq \left\| A^{-\frac{\beta}{2}} (I-S(t-\kappa(t))) A^{\frac{\beta}{2}}u^{N,\tau}_{\kappa(t)} \right\|  
			+ C \frac{\tau (\|u^{N,\tau}_{\kappa(t)}\|_{L^{6}}^3 + 1)}{1+\tau^{\beta}\|u^{N,\tau}_{\kappa(t)}\|_{L^\infty}^6+\tau^{\beta}\|u^{N,\tau}_{\kappa(t)}\|_{\dot{H}^{\beta}}^6} \\ 
			&\quad + \Big\| \int_{\kappa(t)}^{t} S_N(t-s) dW(s) \Big\| \\
			&\leq C \tau^{\frac{\beta}{2}}  \|u^{N,\tau}_{\kappa(t)}\|_{\dot{H}^{\beta}}
			+ C \tau^{1-\frac{\beta}{2}}
			+ \Big\| \int_{\kappa(t)}^{t} S_N(t-s) dW(s) \Big\|. 
		\end{aligned}
	\end{equation} 
	For convenience, we introduce the following notations 
	\begin{equation}\label{notation}
		\mathbb{O}_t := \|\mathcal{O}^N_{\kappa(t)}\|_{L^{\infty}} + \|\mathcal{O}^N_{t}\|_{L^{\infty}}  
		\quad  \text{and} \quad  \mathbb{W}_t := \Big\| \int_{\kappa(t)}^{t} S_N(t-s) dW(s) \Big\|.
	\end{equation}
	Combining with \eqref{Est:vbar L8}, \eqref{Est:vbar Holder}, the definition of $G(t)$, and the fact $\beta\in(0,1]$, we obtain 
	\begin{equation}\label{Est:G F-F}
		\begin{aligned}
			&G(t) \|F(u^{N,\tau}_{t})-F(u^{N,\tau}_{\kappa(t)})\| \\
			\leq \ & C_F G(t) \left( 1+\|u^{N,\tau}_t\|_{L^\infty}^2+\|u^{N,\tau}_{\kappa(t)}\|_{L^\infty}^2 \right) \|u^{N,\tau}_{t}-u^{N,\tau}_{\kappa(t)}\| \\
			\leq \ & C G(t) \left( 1+ \|u^{N,\tau}_{\kappa(t)}\|_{L^{\infty}}^2 + \mathbb{O}_t^2
			+ \tau^{\frac{3}{2}-\beta} \right) 
			\left( \tau^{\frac{\beta}{2}} \|u^{N,\tau}_{\kappa(t)}\|_{\dot{H}^{\beta}}
			+ \tau^{1-\frac{\beta}{2}}  
			+ \mathbb{W}_t \right)	\\
			\leq \ & C G(t) \tau^{\frac{\beta}{2}} \left( 1 + C(\tau_0) + \|u^{N,\tau}_{\kappa(t)}\|_{L^{\infty}}^3 + \|u^{N,\tau}_{\kappa(t)}\|_{\dot{H}^{\beta}}^3 + \mathbb{O}_t^3 \right) \\
			&+ C G(t) \tau^{1-\frac{\beta}{2}} \left( 1 + C(\tau_0) + \|u^{N,\tau}_{\kappa(t)}\|_{L^{\infty}}^2 + \mathbb{O}_t^2 \right)  
			+ C G(t) \left( 1 + C(\tau_0) + \|u^{N,\tau}_{\kappa(t)}\|_{L^{\infty}}^2 + \mathbb{O}_t^2 \right) \mathbb{W}_t \\
			\leq \ & C(\tau_0) ( 1 + \mathbb{O}_t^3 )
			+ C(\tau_0) ( \tau^{-\frac{\beta}{3}} + \mathbb{O}_t^2 ) \mathbb{W}_t. 
		\end{aligned}
	\end{equation}
	Substituting \eqref{Est:G F-F} into \eqref{Est:vbar L2} and using Poincar\'e's inequality, yields that 
	\begin{align*}
		\frac{1}{2} \frac{d}{dt} \|\bar{u}^{N,\tau}_{t}\|^2 
		\leq -\frac{\lambda_{1}}{2}\|\bar{u}^{N,\tau}_{t}\|^2 + C (\|\mathcal{O}_t^N\|_{L^4}^4+1) + C(\tau_0) (1+\mathbb{O}_t^6)
		+ C(\tau_0) ( \tau^{-\frac{2\beta}{3}} + \mathbb{O}_t^4 ) \mathbb{W}_t^2. 
	\end{align*}
	By Gronwall's inequality, H\"older's inequality, \eqref{Holder O} with $\gamma=0$, then taking $p/2$-moment, $p\geq2$, and recalling notations $\mathbb{O}_t$ and $\mathbb{W}_t$ given by \eqref{notation}, we get 
	\begin{equation}\label{Est:vbar Lp}
		\begin{aligned}
			&\mathbb{E}[\|\bar{u}^{N,\tau}_{t}\|^p] 
			\leq Ce^{-\frac{\lambda_{1}pt}{2}} \mathbb{E}[\|u^{N}_0\|^p] 
			+ C\int_{0}^{t} e^{-\lambda_{1}(t-s)} \left(\mathbb{E}[(1+\mathbb{O}_s^6)^{\frac{p}{2}}]
			+ \mathbb{E}\big[(\tau^{-\frac{2\beta}{3}} + \mathbb{O}_s^4)^{\frac{p}{2}} \mathbb{W}_s^p\big] \right) ds  \\ 
			&\leq Ce^{-\frac{\lambda_{1}pt}{2}} \mathbb{E}[\|u^{N}_0\|^p] 
			+ C\int_{0}^{t} e^{-\lambda_{1}(t-s)} \left(1 + \tau^{-\frac{p\beta}{3}} \mathbb{E}[\mathbb{W}_s^p] + \left(\mathbb{E}[\mathbb{O}_s^{4p}]\right)^{\frac{1}{2}} \left(\mathbb{E}[\mathbb{W}_s^{2p}]\right)^{\frac{1}{2}} \right) ds  \\ 
			&\leq Ce^{-\frac{\lambda_{1}pt}{2}} \mathbb{E}[\|u^{N}_0\|^p] 
			+ C\int_{0}^{t} e^{-\lambda_{1}(t-s)} (1 + \tau^{-\frac{p\beta}{3}} \tau^{\frac{p\beta}{2}} + \tau^{\frac{p\beta}{2}} ) ds \\
			&\leq C \left(e^{-\frac{\lambda_{1}pt}{2}} \mathbb{E}[\|u^{N}_0\|^p] 
			+ 1\right). 
		\end{aligned}
	\end{equation}

	Next we estimate $\mathbb{E}[\|\bar{u}^{N,\tau}_{t}\|_{\dot{H}^1}^p]$.  
	By the identity $\frac{1}{2} \frac{d}{dt} \|\bar{u}^{N,\tau}_{t}\|_{\dot{H}^1}^2 = \langle A\bar{u}^{N,\tau}_{t} , \frac{d}{dt}\bar{u}^{N,\tau}_{t} \rangle$, we have 
	\begin{equation}\label{Est:vbar H1}
		\begin{aligned}
			\frac{1}{2} \frac{d}{dt} \|\bar{u}^{N,\tau}_{t}\|_{\dot{H}^1}^2 
			&= -\|A\bar{u}^{N,\tau}_{t}\|^2
			+ \langle A\bar{u}^{N,\tau}_{t},G(t)F(u^{N,\tau}_{t}) \rangle 
			- G(t) \langle A\bar{u}^{N,\tau}_{t} , F(u^{N,\tau}_{t})-F(u^{N,\tau}_{\kappa(t)}) \rangle \\
			&=: -\|A\bar{u}^{N,\tau}_{t}\|^2 + \mathbb{K}_1 + \mathbb{K}_2. 
		\end{aligned}
	\end{equation}
	For $\mathbb{K}_1$, by the integration by parts, Young's inequality, and the Gagliardo--Nirenberg inequality, 
	\begin{equation*}
		\|x\|_{L^p} \leq C \|Ax\|^\alpha \|x\|^{1-\alpha}, \quad \text{where } \alpha=\frac{p-2}{4p}\in(0,1], \ C>0, 
	\end{equation*}
	we know that 
	\begin{equation}\label{Est:K1}
		\begin{aligned}
			&\mathbb{K}_1 = G(t) \left\langle A\bar{u}^{N,\tau}_{t},-a_3(\bar{u}^{N,\tau}_{t}+\mathcal{O}_t^N)^3+a_2(\bar{u}^{N,\tau}_{t}+\mathcal{O}_t^N)^2+a_1(\bar{u}^{N,\tau}_{t}+\mathcal{O}_t^N)+a_0 \right\rangle \\
			&\leq -a_3G(t) \langle (\partial_{x} \bar{u}^{N,\tau}_{t})^2,3(\bar{u}^{N,\tau}_{t})^2 \rangle
			+ \epsilon G(t) \|A\bar{u}^{N,\tau}_{t}\|^2  \\
			&\quad + C(\epsilon)G(t) \left( 1 + \|\bar{u}^{N,\tau}_{t}\|_{L^4}^4\|\mathcal{O}_t^N\|_{L^\infty}^2
			+ \|\bar{u}^{N,\tau}_{t}\|^2\|\mathcal{O}_t^N\|_{L^\infty}^4 + \|\mathcal{O}_t^N\|_{L^\infty}^6 + \|\bar{u}^{N,\tau}_{t}\|_{L^4}^4 \right) \\
			&\leq \epsilon\|A\bar{u}^{N,\tau}_{t}\|^2 
			+ C(\epsilon) \left( 1 + \|A\bar{u}^{N,\tau}_{t}\|^{\frac{1}{2}}\|\bar{u}^{N,\tau}_{t}\|^{\frac{7}{2}} (\|\mathcal{O}_t^N\|_{L^\infty}^2+1)
			+ \|\bar{u}^{N,\tau}_{t}\|^2\|\mathcal{O}_t^N\|_{L^\infty}^4 + \|\mathcal{O}_t^N\|_{L^\infty}^6 \right) \\
			&\leq 2\epsilon\|A\bar{u}^{N,\tau}_{t}\|^2 
			+ C(\epsilon) \left( 1 + \|\bar{u}^{N,\tau}_{t}\|^{\frac{14}{3}} (\|\mathcal{O}_t^N\|_{L^\infty}^{\frac{8}{3}}+1)
			+ \|\bar{u}^{N,\tau}_{t}\|^2 \|\mathcal{O}_t^N\|_{L^\infty}^4 + \|\mathcal{O}_t^N\|_{L^\infty}^6 \right). 
		\end{aligned}
	\end{equation}
	For $\mathbb{K}_2$, by Young's inequality and \eqref{Est:G F-F} we have 
	\begin{equation}\label{Est:K2}
		\begin{aligned}
			&\mathbb{K}_2 = - G(t) \langle A\bar{u}^{N,\tau}_{t} , F(u^{N,\tau}_{t})-F(u^{N,\tau}_{\kappa(t)}) \rangle \\
			&\leq \epsilon \|A\bar{u}^{N,\tau}_{t}\|^2 + C(\epsilon) (G(t))^2 \|F(u^{N,\tau}_{t})-F(u^{N,\tau}_{\kappa(t)})\|^2 \\
			&\leq \epsilon \|A\bar{u}^{N,\tau}_{t}\|^2 + C(\epsilon,\tau_0) (1+\mathbb{O}_t^6)
			+ C(\epsilon,\tau_0) ( \tau^{-\frac{2\beta}{3}} + \mathbb{O}_t^4 ) \mathbb{W}_t^2. 
		\end{aligned}
	\end{equation}
	Substituting \eqref{Est:K1} and \eqref{Est:K2} into \eqref{Est:vbar H1}, then taking $\epsilon=\frac{1}{6}$ and using Poincar\'e's inequality, we get 
	\begin{align*} 
		\frac{1}{2} \frac{d}{dt} \|\bar{u}^{N,\tau}_{t}\|_{\dot{H}^1}^2 
		&\leq (3\epsilon-1) \lambda_{1} \|\bar{u}^{N,\tau}_{t}\|_{\dot{H}^1}^2 + C(\epsilon) \left( 1 + \|\bar{u}^{N,\tau}_{t}\|^{\frac{14}{3}} (\|\mathcal{O}_t^N\|_{L^\infty}^{\frac{8}{3}}+1)
		+ \|\bar{u}^{N,\tau}_{t}\|^2\|\mathcal{O}_t^N\|_{L^\infty}^4 \right) \\
		&\quad + C(\epsilon) \|\mathcal{O}_t^N\|_{L^\infty}^6
		+ C(\epsilon,\tau_0) (1+\mathbb{O}_t^6)
		+ C(\epsilon,\tau_0) ( \tau^{-\frac{2\beta}{3}} + \mathbb{O}_t^4 ) \mathbb{W}_t^2 \\
		&\leq -\frac{\lambda_{1}}{2} \|\bar{u}^{N,\tau}_{t}\|_{\dot{H}^1}^2 + C \left( 1 + \|\bar{u}^{N,\tau}_{t}\|^{\frac{14}{3}} (\|\mathcal{O}_t^N\|_{L^\infty}^{\frac{8}{3}}+1)
		+ \mathbb{O}_t^6 \right)
		+ C ( \tau^{-\frac{2\beta}{3}} + \mathbb{O}_t^4 ) \mathbb{W}_t^2. 
	\end{align*}
	By Gronwall's inequality, \eqref{Est:vbar Lp}, H\"older's inequality, then taking $p/2$-moment, we obtain 
	\begin{align*}
		&\quad \mathbb{E}[\|\bar{u}^{N,\tau}_{t}\|_{\dot{H}^1}^p] \\
		&\leq C e^{-\frac{\lambda_{1}pt}{2}} \mathbb{E}[\|u^{N}_0\|_{\dot{H}^1}^p] 
		+ C \int_{0}^{t} e^{-\frac{\lambda_{1}p(t-s)}{2}} \left( 1 + \left(\mathbb{E}[\|\bar{u}^{N,\tau}_{s}\|^{\frac{14p}{3}}]\right)^{\frac{1}{2}} \left(\left(\mathbb{E}[\|\mathcal{O}_s^N\|_{L^\infty}^{\frac{8p}{3}}]\right)^{\frac{1}{2}}+1\right) \right) ds \\
		&\quad + C\int_{0}^{t} e^{-\frac{\lambda_{1}p(t-s)}{2}} \left( \mathbb{E}[\mathbb{O}_s^{3p}] + \tau^{-\frac{p\beta}{3}} \mathbb{E}[\mathbb{W}_s^p] + \left(\mathbb{E}[\mathbb{O}_s^{4p}]\right)^{\frac{1}{2}} \left(\mathbb{E}[\mathbb{W}_s^{2p}]\right)^{\frac{1}{2}} \right) ds \\
		&\leq C e^{-\frac{\lambda_{1}pt}{2}} \mathbb{E}[\|u^{N}_0\|_{\dot{H}^1}^p] + 
		C(p,Q) \left(\left(\mathbb{E}[\|u^{N}_0\|^{\frac{14p}{3}}]\right)^{\frac{1}{2}} + \tau^{-\frac{\beta p}{3}} \tau^{\frac{\beta p}{2}} + \tau^{\frac{\beta p}{2}} + 1 \right) \\
		&\leq C(u_0,p,Q,\tau_0) < \infty. 
	\end{align*}
	Finally, through \eqref{Est:PN S 1}, the Sobolev embedding inequality $\dot{H}^1 \hookrightarrow L^\infty(\mathcal{I})$ and the above estimate, 
	\begin{equation*}
		\|u^{N,\tau}_{t}\|_{L^p(\Omega;L^\infty)} 
		\leq \|\bar{u}^{N,\tau}_{t}\|_{L^p(\Omega;L^\infty)} + \|\mathcal{O}^N_{t}\|_{L^p(\Omega;L^\infty)} \leq C(u_0,p,Q,\tau_0) 
		< \infty, 
	\end{equation*}
	which completes the proof. 
\end{proof}

By using Theorem \ref{Th:moment bound v}, it is easy to derive the regularity properties of $u^{N,\tau}_t$. 
\begin{Corollary}\label{Corollary 2}
	Let Assumptions \ref{Asp:Initial value}--\ref{Asp:f}, and \ref{Asp:additional asp} hold. Then for $p \geq 2$ and $\beta \in (0,1]$, there exists a constant $C>0$ such that 
	\begin{equation}\label{regularity v1}
		\sup\limits_{N \in \mathbb{N}, \tau\in(0,\tau_0], t\geq0} \|u^{N,\tau}_{t}\|_{L^p(\Omega; \dot{H}^\beta)}  \leq C < \infty. 
	\end{equation}
	Furthermore, there exists a constant $C$ independent of $N$ and $\tau$, such that for $\gamma \in [0, \beta]$, 
	\begin{equation}\label{Holder v}
		\|u^{N,\tau}_t-u^{N,\tau}_s\|_{L^p(\Omega;\dot{H}^{-\gamma})} 
		\leq C (t-s)^{\frac{\beta+\gamma}{2}}, \quad 0 \leq s < t.  
	\end{equation} 
\end{Corollary}
\begin{proof}
	The estimate \eqref{regularity v1} is standard, so we only focus on \eqref{Holder v}. We first note that 
	\begin{align*}
		u^{N,\tau}_{t}-u^{N,\tau}_{s} = (S_N(t-s)-I) u^{N,\tau}_{s} 
		+ \int_{s}^{t}  G(r) S_N(t-r)F_N(u^{N,\tau}_{\kappa(r)}) dr 
		+ \int_{s}^{t} S_N(t-r) dW(r). 
	\end{align*}
	Through \eqref{semigroup2} we get 
	\begin{align*}
		\|(S(t-s)-I)u^{N,\tau}_{s}\|_{L^p(\Omega;\dot{H}^{-\gamma})}
		&= \|A^{-\frac{\beta+\gamma}{2}} (S_N(t-s)-I) A^{\frac{\beta}{2}}u^{N,\tau}_{s}\|_{L^p(\Omega;H)} \\
		&\leq C (t-s)^{\frac{\beta+\gamma}{2}} \|u^{N,\tau}_s\|_{L^p(\Omega;\dot{H}^{\beta})}.
	\end{align*}
	Since $G(t) \in (0,1]$, we have 
	\begin{align*}
		&\left\| \int_{s}^{t}  G(r)S_N(t-r)F_N(u^{N,\tau}_{\kappa(r)}) dr \right\|_{L^p(\Omega;\dot{H}^{-\gamma})} \\
		\leq \ & \int_{s}^{t} \|F(u^{N,\tau}_{\kappa(r)})\|_{L^p(\Omega;H)} dr 
		\leq C (t-s) \left(1+\sup\limits_{t \geq 0}\|u^{N,\tau}_t\|_{L^{3p}(\Omega;L^6)}^3\right). 
	\end{align*}
	By the Burkholder--Davis--Gundy inequality and It\^o's isometry, it is easy to derive
	\begin{equation}\label{Holder O}
		\left\| \int_{s}^{t} S_N(t-r) dW(r) \right\|_{L^p(\Omega;\dot{H}^{-\gamma})} \leq C (t-s)^{\frac{\beta+\gamma}{2}}, \quad \forall 0 \leq s < t,  
	\end{equation}
	which completes the proof. 
\end{proof}

\subsection{Weak convergence rate of the full discretization}
Next we analyze the weak convergence of our proposed scheme in $[0,T]$ for any $T>0$. 
\begin{proof}[Proof of Theorem \ref{Th:full-discrete order}]
	Since the estimate of $\left| \mathbb{E}[\Phi(u(T))] - \mathbb{E}[\Phi(u^N(T))] \right|$ has already been done in Theorem \ref{Th:semi-discrete order}, it is sufficient to consider the weak error between $u^N(T)$ and $u^{N,\tau}_T$ for any $T>0$. By Taylor's expansion and the boundedness of $\Phi'$, we get 
	\begin{align*}
		&\left| \mathbb{E}[\Phi(u^N(T))] - \mathbb{E}[\Phi(u^{N,\tau}_T)] \right|
		= \left| \mathbb{E}[\Phi(\bar{u}^N(T)+\mathcal{O}_T^N)] - \mathbb{E}[\Phi(\bar{u}^{N,\tau}_T+\mathcal{O}_T^N)] \right| \\
		&= \left| \mathbb{E} \left[ \int_{0}^{1} \Phi'(u^{N,\tau}_T+\lambda(u^N(T)-u^{N,\tau}_T)) (\bar{u}^N(T)-\bar{u}^{N,\tau}_T) d\lambda \right] \right|
		\leq C \|\bar{u}^N(T)-\bar{u}^{N,\tau}_T\|_{L^2(\Omega;H)}. 
	\end{align*}
	We denote $\mathcal{E}_t := \bar{u}^N(t)-\bar{u}^{N,\tau}_t$, which solves 
	\begin{align*}
		\frac{d}{dt} \mathcal{E}_t = -A_N \mathcal{E}_t + F_N(\bar{u}^N(t)+\mathcal{O}^N_t) - G(t)F_N(\bar{u}^{N,\tau}_{\kappa(t)}+\mathcal{O}_{\kappa(t)}^N), \quad \mathcal{E}_0=0. 
	\end{align*} 
	Through the identity $\frac{1}{2} \frac{d}{dt} \|\mathcal{E}_t\|^2 = \langle \mathcal{E}_t,\frac{d}{dt}\mathcal{E}_t \rangle$ and the integration by parts, we obtain 
	\begin{align*}
		&\quad \frac{1}{2} \frac{d}{dt} \|\mathcal{E}_t\|^2 + \|\mathcal{E}_t\|_{\dot{H}^1}^2 \\
		&= \left\langle \mathcal{E}_t , F(\bar{u}^N(t)+\mathcal{O}^N_t) - F(\bar{u}^{N,\tau}_{t}+\mathcal{O}_{t}^N) \right\rangle 
		+ \left\langle \mathcal{E}_t , F(\bar{u}^{N,\tau}_{t}+\mathcal{O}_{t}^N) - F(\bar{u}^{N,\tau}_{\kappa(t)}+\mathcal{O}_{\kappa(t)}^N) \right\rangle \\
		&\quad + \left\langle \mathcal{E}_t , F(\bar{u}^{N,\tau}_{\kappa(t)}+\mathcal{O}_{\kappa(t)}^N) - G(t)F(\bar{u}^{N,\tau}_{\kappa(t)}+\mathcal{O}_{\kappa(t)}^N) \right\rangle \\
		&=: K_1 + K_2 + K_3. 
	\end{align*}
	For $K_1$, from \eqref{Est:F1}, we know that 
	\begin{equation}\label{proof K1}
		\begin{aligned}
			&K_1 = \left\langle \bar{u}^N(t)-\bar{u}^{N,\tau}_t , F(\bar{u}^N(t)+\mathcal{O}^N_t) - F(\bar{u}^{N,\tau}_{t}+\mathcal{O}_{t}^N) \right\rangle \\
			&\leq L_F \|\bar{u}^N(t)-\bar{u}^{N,\tau}_t\|^2 = L_F \|\mathcal{E}_t\|^2. 
		\end{aligned}
	\end{equation}
	For $K_3$, by Young's inequality and $\frac{\tau a}{1+\tau a} \leq \tau a$ for $a \geq 0$, we have 
	\begin{equation}\label{proof K3}
		\begin{aligned}
			&K_3 = \langle \mathcal{E}_t , F(\bar{u}^{N,\tau}_{\kappa(t)}+\mathcal{O}_{\kappa(t)}^N) \rangle \frac{\tau^{\beta}\|u^{N,\tau}_{\kappa(t)}\|_{L^\infty}^6+\tau^{\beta}\|u^{N,\tau}_{\kappa(t)}\|_{\dot{H}^{\beta}}^6}{1+\tau^{\beta}\|u^{N,\tau}_{\kappa(t)}\|_{L^\infty}^6+\tau^{\beta}\|u^{N,\tau}_{\kappa(t)}\|_{\dot{H}^{\beta}}^6} \\
			&\leq \epsilon \|\mathcal{E}_t\|^2 + \frac{2\tau^{2\beta}}{4\epsilon} \|F(u^{N,\tau}_{\kappa(t)})\|^2  \left( \|u^{N,\tau}_{\kappa(t)}\|_{L^\infty}^{12}+\|u^{N,\tau}_{\kappa(t)}\|_{\dot{H}^{\beta}}^{12} \right) \\
			&\leq \epsilon \|\mathcal{E}_t\|^2 + C(\epsilon) \tau^{2\beta} \left( \|u^{N,\tau}_{\kappa(t)}\|_{L^\infty}^{18}+\|u^{N,\tau}_{\kappa(t)}\|_{\dot{H}^{\beta}}^{18}+1 \right). 
		\end{aligned}
	\end{equation}
	Next we focus on the estimate of $K_2$. By the second-order Taylor expansion, we obtain 
	\begin{align*}
		F(u^{N,\tau}_{t}) - F(u^{N,\tau}_{\kappa(t)}) 
		&= F'(u^{N,\tau}_{\kappa(t)}) (u^{N,\tau}_{t}-u^{N,\tau}_{\kappa(t)}) \\
		&\quad + \underbrace{ \int_{0}^{1} F''(u^{N,\tau}_{\kappa(t)}+\lambda(u^{N,\tau}_t-u^{N,\tau}_{\kappa(t)})) ( u^{N,\tau}_t-u^{N,\tau}_{\kappa(t)},u^{N,\tau}_t-u^{N,\tau}_{\kappa(t)} ) (1-\lambda) d\lambda }_{\text{denote it by } R_F}. 
	\end{align*}
	Since 
	\begin{align*}
		u^{N,\tau}_t-u^{N,\tau}_{\kappa(t)} = (S_N(t-\kappa(t))-I) u^{N,\tau}_{\kappa(t)} + \int_{\kappa(t)}^{t} G(s)S_N(t-s)F_N(u^{N,\tau}_{\kappa(s)}) ds
		+ \int_{\kappa(t)}^{t} S_N(t-s) dW(s), 
	\end{align*}
	we know that 
	\begin{align*}
		&K_2 = \left\langle \mathcal{E}_t , F'(u^{N,\tau}_{\kappa(t)}) (u^{N,\tau}_{t}-u^{N,\tau}_{\kappa(t)}) + R_F \right\rangle \\
		&= \left\langle \mathcal{E}_t , F'(u^{N,\tau}_{\kappa(t)}) (S_N(t-\kappa(t))-I) u^{N,\tau}_{\kappa(t)} \right\rangle 
		+ \left\langle \mathcal{E}_t , F'(u^{N,\tau}_{\kappa(t)}) \int_{\kappa(t)}^{t} G(s)S_N(t-s)F_N(u^{N,\tau}_{\kappa(s)}) ds \right\rangle \\
		&\quad + \left\langle \mathcal{E}_t , F'(u^{N,\tau}_{\kappa(t)}) \int_{\kappa(t)}^{t} S_N(t-s) dW(s) \right\rangle 
		+ \langle \mathcal{E}_t , R_F \rangle \\
		&=: K_{21} + K_{22} + K_{23} + K_{24}. 
	\end{align*}
	For $K_{21}$, by Young's inequality, \eqref{Est:negative norm} with $\theta=\beta$, H\"older's inequality and \eqref{semigroup2}, we have 
	\begin{equation}\label{proof K21}
		\begin{aligned}
			&\mathbb{E}[K_{21}] = \mathbb{E} \left[ \left\langle A^{\frac{1}{2}} \mathcal{E}_t , A^{-\frac{1}{2}} F'(u^{N,\tau}_{\kappa(t)}) (S_N(t-\kappa(t))-I) u^{N,\tau}_{\kappa(t)} \right\rangle \right] \\
			&\leq \mathbb{E} \left[ \epsilon \|\mathcal{E}_t\|_{\dot{H}^1}^2 + \frac{C}{4\epsilon} \left( 1+\max\{ \|u^{N,\tau}_{\kappa(t)}\|_{L^\infty}^4 , \|u^{N,\tau}_{\kappa(t)}\|_{\dot{H}^{\beta}}^4\} \right)  \|(S(t-\kappa(t))-I)u^{N,\tau}_{\kappa(t)}\|_{\dot{H}^{-\beta}}^2 \right] \\
			&\leq \epsilon \mathbb{E}[\|\mathcal{E}_t\|_{\dot{H}^1}^2] + \frac{C}{4\epsilon} \left( \mathbb{E} \left[ 1+ \|u^{N,\tau}_{\kappa(t)}\|_{L^\infty}^8 + \|u^{N,\tau}_{\kappa(t)}\|_{\dot{H}^{\beta}}^8 \right] \right)^{\frac{1}{2}}   
			\tau^{2\beta} \left( \mathbb{E}[\|u^{N,\tau}_{\kappa(t)}\|_{\dot{H}^{\beta}}^4] \right)^{\frac{1}{2}} \\
			&\leq \epsilon \mathbb{E}[\|\mathcal{E}_t\|_{\dot{H}^1}^2] + \frac{C}{4\epsilon} \tau^{2\beta} \left( 1 +  \sup\limits_{t\geq0}\|u^{N,\tau}_{t}\|_{L^8(\Omega;L^\infty)}^8 + \sup\limits_{t\geq0}\|u^{N,\tau}_{t}\|_{L^8(\Omega;\dot{H}^{\beta})}^8 \right). 
		\end{aligned}
	\end{equation}
	For $K_{22}$, through Young's inequality, \eqref{Est:F2} and H\"older's inequality, we obtain 
	\begin{equation}\label{proof K22}
		\begin{aligned}
			&\mathbb{E}[K_{22}] \leq \epsilon \mathbb{E}[\|\mathcal{E}_t\|^2] + \frac{1}{4\epsilon} \mathbb{E} \left[\left( \int_{\kappa(t)}^{t} \left\|F'(u^{N,\tau}_{\kappa(t)}) S_N(t-s)F_N(u^{N,\tau}_{\kappa(s)}) \right\| ds \right)^2\right] \\
			&\leq \epsilon \mathbb{E}[\|\mathcal{E}_t\|^2] + \frac{C\tau}{4\epsilon} \mathbb{E}\left[ \int_{\kappa(t)}^{t} \left(1+\|u^{N,\tau}_{\kappa(t)}\|_{L^\infty}^4\right) \|F(u^{N,\tau}_{\kappa(s)})\|^2 ds \right] \\
			&\leq \epsilon \mathbb{E}[\|\mathcal{E}_t\|^2] + \frac{C\tau^2}{4\epsilon} \left( 1+\sup\limits_{t\geq0}\|u^{N,\tau}_t\|_{L^{10}(\Omega;L^\infty)}^{10} \right). 
		\end{aligned}
	\end{equation}
	For $K_{24}$, by Young's inequality, \eqref{Est:F2}, H\"older's inequality, \eqref{regularity v} and \eqref{Holder v}, we have 
	\begin{equation}\label{proof K24}
		\begin{aligned}
			&\mathbb{E}[K_{24}] \\ 
			=& \  \mathbb{E}\left[\left\langle A^{\frac{1}{2}} \mathcal{E}_t , A^{-\frac{1}{2}} \int_{0}^{1} F''(u^{N,\tau}_{\kappa(t)}+\lambda(u^{N,\tau}_t-u^{N,\tau}_{\kappa(t)})) \big( u^{N,\tau}_t-u^{N,\tau}_{\kappa(t)} , u^{N,\tau}_t-u^{N,\tau}_{\kappa(t)} \big) (1-\lambda) d\lambda \right\rangle\right] \\
			\leq & \  \epsilon \mathbb{E}[\|\mathcal{E}_t\|_{\dot{H}^1}^2] + \frac{C_F}{4\epsilon} \mathbb{E} \left[\left( \|u^{N,\tau}_{\kappa(t)}\|_{L^\infty}^2 + \|u^{N,\tau}_t\|_{L^\infty}^2 + 1 \right)  \|u^{N,\tau}_t-u^{N,\tau}_{\kappa(t)}\|^4\right]  \\
			\leq & \  \epsilon \mathbb{E}[\|\mathcal{E}_t\|_{\dot{H}^1}^2] + \frac{C}{4\epsilon} \tau^{2\beta}. 
		\end{aligned}
	\end{equation}
	Next we estimate $K_{23}$. Obviously, $\mathcal{E}_{\kappa(t)}$ and $F'(u^{N,\tau}_{\kappa(t)})$ are $\mathcal{F}_{\kappa(t)}$ measurable, which means that 
	\begin{align*}
		&\mathbb{E}[K_{23}] = \mathbb{E} \Big[\big\langle \mathcal{E}_t , F'(u^{N,\tau}_{\kappa(t)}) \int_{\kappa(t)}^{t} S_N(t-s) dW(s) \big\rangle\Big] \\
		&= \mathbb{E} \Big[\big\langle \mathcal{E}_t - \mathcal{E}_{\kappa(t)} , F'(u^{N,\tau}_{\kappa(t)}) \int_{\kappa(t)}^{t} S_N(t-s) dW(s) \big\rangle\Big]
		+ \underbrace{\mathbb{E} \Big[\big\langle \mathcal{E}_{\kappa(t)} , F'(u^{N,\tau}_{\kappa(t)}) \int_{\kappa(t)}^{t} S_N(t-s) dW(s) \big\rangle \Big]}_{=0}. 
	\end{align*}
	Denote 
	\begin{align*}
		M_{\kappa(t)} :=& \ (S_N(t-\kappa(t))-I) \mathcal{E}_{\kappa(t)}-\int_{\kappa(t)}^{t}  G(s)S_N(t-s)F_N(u^{N,\tau}_{\kappa(s)}) ds \\ 
		=& \ (S_N(t-\kappa(t))-I) \mathcal{E}_{\kappa(t)} 
		- A_N^{-1} (I-S_N(t-\kappa(t)))  G(\kappa(t)) F_N(u^{N,\tau}_{\kappa(t)}), 
	\end{align*}
	then $M_{\kappa(t)}$ is $\mathcal{F}_{\kappa(t)}$ measurable, which means that  
	\begin{align*}
		\mathbb{E} \left[\left\langle M_{\kappa(t)} , F'(u^{N,\tau}_{\kappa(t)}) \int_{\kappa(t)}^{t} S_N(t-s) dW(s) \right\rangle\right] = 0. 
	\end{align*}
	Since $\int_{\kappa(t)}^{t}  S_N(t-s)F_N(u^N(\kappa(t))) ds$ is $\mathcal{F}_{\kappa(t)}$ measurable, we obtain 
	\begin{align*}
		\mathbb{E} \left[\left\langle \int_{\kappa(t)}^{t}  S_N(t-s)F_N(u^N(\kappa(t))) ds , F'(u^{N,\tau}_{\kappa(t)}) \int_{\kappa(t)}^{t} S_N(t-s) dW(s) \right\rangle\right] = 0. 
	\end{align*}
	From
	\begin{align*}
		\mathcal{E}_t-\mathcal{E}_{\kappa(t)} 
		&= (S_N(t-\kappa(t))-I) \mathcal{E}_{\kappa(t)} + \int_{\kappa(t)}^{t} \left( S_N(t-s)F_N(u^N(s)) -  G(s)S_N(t-s)F_N(u^{N,\tau}_{\kappa(s)}) \right) ds \\
		&= M_{\kappa(t)} + \int_{\kappa(t)}^{t} S_N(t-s)F_N(u^N(s)) ds,  
	\end{align*}
	and H\"older's inequality, \eqref{Est:F2}, \eqref{Holder F(uN)}, \eqref{regularity v} and \eqref{Holder O} with $\gamma=0$, $p=4$, we have 
	\begin{equation}\label{proof:K23}
		\begin{aligned}
			&\mathbb{E}[K_{23}] \\
			= \ &  \mathbb{E} \Big[\Big\langle M_{\kappa(t)} + \int_{\kappa(t)}^{t} S_N(t-s)F_N(u^N(s)) ds , F'(u^{N,\tau}_{\kappa(t)}) \int_{\kappa(t)}^{t} S_N(t-s) dW(s) \Big\rangle\Big] \\
			= \ &  \mathbb{E} \Big[\Big\langle \int_{\kappa(t)}^{t} S_N(t-s) \left( F_N(u^N(s)) - F_N(u^N(\kappa(t))) \right)ds , F'(u^{N,\tau}_{\kappa(t)}) \int_{\kappa(t)}^{t} S_N(t-s) dW(s) \Big\rangle\Big] \\
			\leq \ & C \int_{\kappa(t)}^{t} \|F(u^N(s))-F(u^N(\kappa(t)))\|_{L^2(\Omega;H)} ds \\
			& \times \left(\mathbb{E} \Big[ \big(1+\|u^{N,\tau}_{\kappa(t)}\|_{L^\infty}^2\big)^2 \Big\| \int_{\kappa(t)}^{t} S_N(t-s) dW(s) \Big\|^2 \Big] \right)^{\frac{1}{2}} \\
			\leq \ & C \tau^{1+\frac{\beta}{2}} \left( 1+\sup\limits_{t\geq0} \|u^{N,\tau}_t\|_{L^8(\Omega;L^\infty)}^2 \right) 
			\Big\|  \int_{\kappa(t)}^{t} S_N(t-s) dW(s) \Big\|_{L^4(\Omega;H)}
			\leq C \tau^{1+\beta}. 
		\end{aligned}
	\end{equation}
	Finally, putting the estimates \eqref{proof K1}, \eqref{proof K3}, \eqref{proof K21}--\eqref{proof:K23} together, then by Poincar\'e's inequality, we obtain 
	\begin{align*}
		\frac{1}{2} \frac{d}{dt} \mathbb{E}[\|\mathcal{E}_t\|^2] &\leq -\mathbb{E}[\|\mathcal{E}_t\|_{\dot{H}^1}^2] + 4\epsilon \mathbb{E}[\|\mathcal{E}_t\|_{\dot{H}^1}^2] + L_F \mathbb{E}[\|\mathcal{E}_t\|^2] + C(\epsilon) (\tau^{2\beta}+\tau^2+\tau^{1+\beta}) \\
		&\leq -(\lambda_{1}-4\epsilon\lambda_{1}-L_F) \mathbb{E}[\|\mathcal{E}_t\|^2] + C(\epsilon,\tau_0) \tau^{2\beta}. 
	\end{align*}
	By taking $\epsilon=\frac{\lambda_{1}-L_F}{8\lambda_{1}}$ and noting that $\lambda_{1}>L_F$ and $\mathcal{E}_0=0$, it follows from Gronwall's inequality that 
	\begin{align*}
		\mathbb{E}[\|\mathcal{E}_t\|^2] 
		\leq C \tau^{2\beta} \int_{0}^{t} e^{-(\lambda_{1}-L_F)(t-s)} ds
		\leq C\tau^{2\beta}, 
	\end{align*}
	where the constant $C$ is independent of $N$, $\tau$ and $t$. This means that 
	\begin{align*}
		\left| \mathbb{E}[\Phi(u^N(T))] - \mathbb{E}[\Phi(u^{N,\tau}_T)] \right|
		\leq C \|\mathcal{E}_T\|_{L^2(\Omega;H)}
		\leq C\tau^{\beta}. 
	\end{align*}
	Since the above estimate holds for any $T>0$, taking $T=K\tau$ and recalling $u^{N,\tau}_T=u^{N}_{K}$ yields the desired conclusion. 	
\end{proof}

From \eqref{exponential decay} with $p=2$, Remark \ref{remark} and Theorem \ref{Th:full-discrete order}, we have 
\begin{align*}
	&\left| \mathbb{E}[\Phi(u^{N}_K)] - \int_{H} \Phi d\mu \right| 
	\leq \left| \mathbb{E}[\Phi(u(K\tau))] - \int_{H} \Phi d\mu \right| 
	+ \left| \mathbb{E}[\Phi(u(K\tau))] - \mathbb{E}[\Phi(u^{N}_K)] \right| \\ 
	&\leq  C(\Phi) e^{-(\lambda_{1}-L_F)K\tau} \left(1 + \mathbb{E}[\|u_0\|]\right) 
	+ C (1+(K\tau)^{-\beta}) ( \lambda_{N+1}^{-\beta}+\tau^{\beta} ). 
\end{align*}
Taking $K$ sufficiently large yields the conclusion of Corollary \ref{Corollary:1}. 

\begin{Remark}
	We note that Corollary \ref{Corollary 2} can ensure the existence of the invariant measure of $u^{N,\tau}_t$ through the Krylov--Bogoliubov theorem. 
	If $\mu^{N,\tau}$ is such an ergodic invariant
	measure of $u^{N,\tau}_t$, then by taking $T$ sufficiently large in Corollary \ref{Corollary:1}, we get the weak convergence between $\mu^{N,\tau}$ and $\mu$, 
	\begin{align*}
		\left| \int_{H} \Phi d\mu^{N,\tau} - \int_{H} \Phi d\mu \right|
		\leq C(u_0,Q,\Phi) (\lambda_{N+1}^{-\beta}+\tau^{\beta}). 
	\end{align*}
	However, whether invariant measure of our numerical scheme is unique, is still unknown, which is our next effort. 
\end{Remark}

\section{Numerical Experiment}\label{Sec:numerical example}
In this section we present a numerical example to validate the theoretical results. We consider the following SACE driven by a $Q$-Wiener process
\begin{equation*}
	\left\{
	\begin{aligned}
		&\frac{\partial u(t, x)}{\partial t} = \frac{\partial^2 u(t, x)}{\partial x^2} + u(t, x) - u^3(t, x) + \dot{W}(t), \quad t \in (0, T], \; x \in (0, 1),\\[2mm]
		&u(t, 0) = u(t, 1) = 0, \quad t \in [0, T],\\[2mm]
		&u(0, x) = \sin(\pi x), \quad x \in (0, 1).
	\end{aligned}
	\right.
\end{equation*}  

We first choose the test function $\Phi(x) = \sin(\|x\|)$ to examine the weak convergence rate. The mathematical expectation is approximated by averaging over 20000 independent sample paths, yielding the weak error at the terminal time $T = 1$. Since the analytical solution is unavailable, the numerical solution with $N = 1000$ and $\tau = 2^{-15}$ is taken as the reference solution. Table \ref{table:error} presents temporal weak errors and weak convergence rates for different time step sizes \(\tau = 2^{-i}, \; i = 4,5,\ldots,8\). From the numerical results we observe that for the case of space-time white noise (i.e., $\beta = 0.5 - \epsilon$), the temporal weak convergence rate is approximately $0.5$, while for the case of trace-class noise (i.e., $\beta = 1$), the temporal weak convergence rate is approximately $1$, which agrees with the theoretical result (see Theorem \ref{Th:full-discrete order}).

\begin{table}[htbp]
	\centering
	\caption{\centering Temporal weak errors and convergence rates.}
	\label{table:error}
	\renewcommand\arraystretch{1.2}
	\begin{tabular}{c|cc|cc}
		\hline
		$\tau$ & \multicolumn{2}{c|}{space-time white noise ($\beta=0.5-\epsilon$)} & \multicolumn{2}{c}{trace-class noise ($\beta=1$)} \\ \hhline{~----}
		& error & rate & error & rate \\
		\hline
		$2^{-4}$ & 3.2282e-03 &          & 2.0902e-03 &          \\
		$2^{-5}$ & 2.4684e-03 & 0.3872  & 1.2478e-03 & 0.7442  \\
		$2^{-6}$ & 1.7870e-03 & 0.4660  & 6.8296e-04 & 0.8695  \\
		$2^{-7}$ & 1.2656e-03 & 0.4978  & 3.5327e-04 & 0.9510  \\
		$2^{-8}$ & 8.9423e-04 & 0.5011  & 1.8373e-04 & 0.9432  \\
		\hline
		expected rate &  & $0.5-\epsilon$ &  & $1$ \\
		\hline
	\end{tabular}
\end{table}

We next investigate how the weak errors evolve over time. 
We take $T=50$ and $N=1000$, and compute two numerical solutions: one with a fine time step size $\tau_1=T/2^{14}$ as the reference solution, and another with a coarser step size $\tau_2=T/2^{10}$. For different test functions $\Phi$, the weak error $|\mathbb{E}[\Phi(u^{N,\tau_1}_k)]-\mathbb{E}[\Phi(u^{N,\tau_2}_k)]|$ is calculated up to $T=50$. 
As shown in Figure \ref{figure1}, the weak errors for all test functions remain bounded and are non-increasing in time, which confirms the suitability of our tamed scheme for long-time simulations.

Finally, we consider the long-time behaviors of the numerical solution. Through Corollary \ref{Corollary:1} we know that $\mathbb{E}[\Phi(u^N_k)]$ with different initial values will converge to $\int_{H} \Phi d\mu$. To verify this property, we simulate $\mathbb{E}[\Phi(u^N_k)]$ started from different initial values with the terminal time $T=10$, as shown in Figure \ref{figure2}. It can be observed that for three different test functions $\Phi$, the values of $\mathbb{E}[\Phi(u^N_k)]$ simulated from different initial values converge to the same value in a short time, which is consistent with the result of Corollary \ref{Corollary:1}.

\begin{figure}[!htp]
	\begin{center}
		\includegraphics[scale = 0.4]{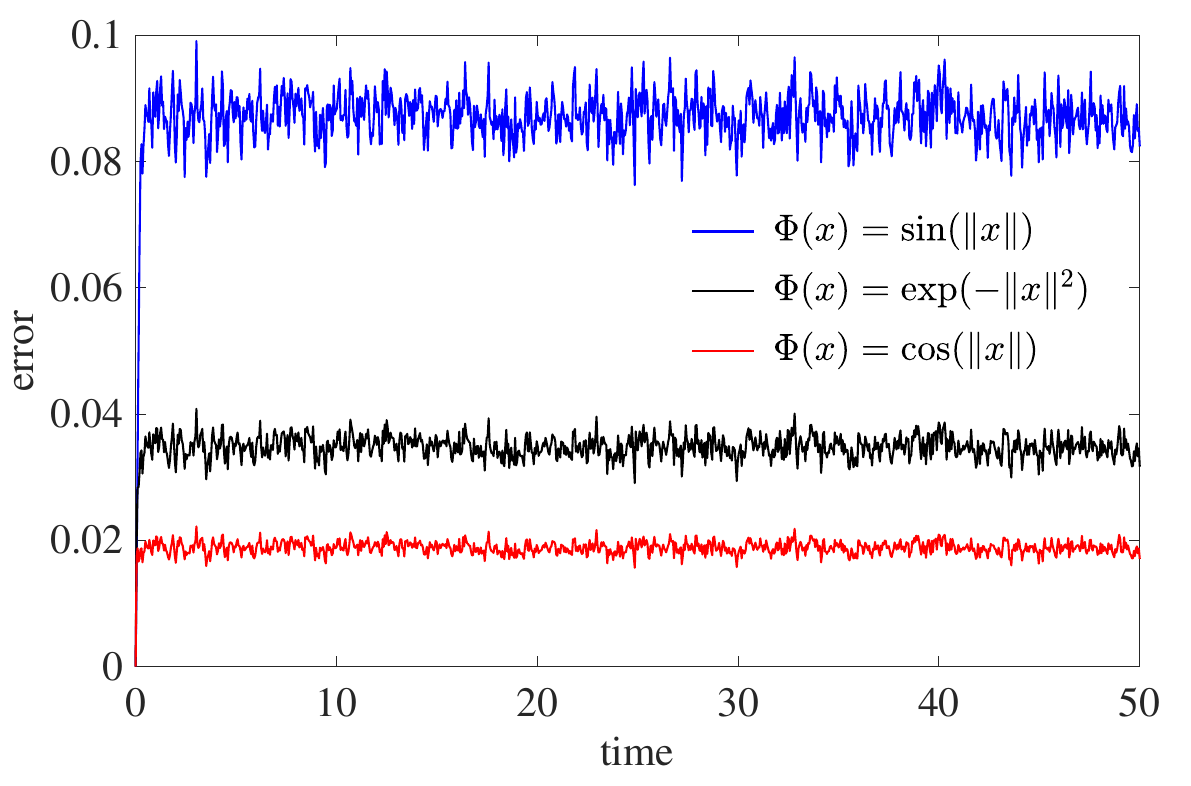}
		 \caption{\centering Evolution of weak errors for different test functions. }\label{figure1}
	\end{center}	
\end{figure}

\begin{figure}[!htp]
	\begin{center}
		\includegraphics[scale = 0.27]{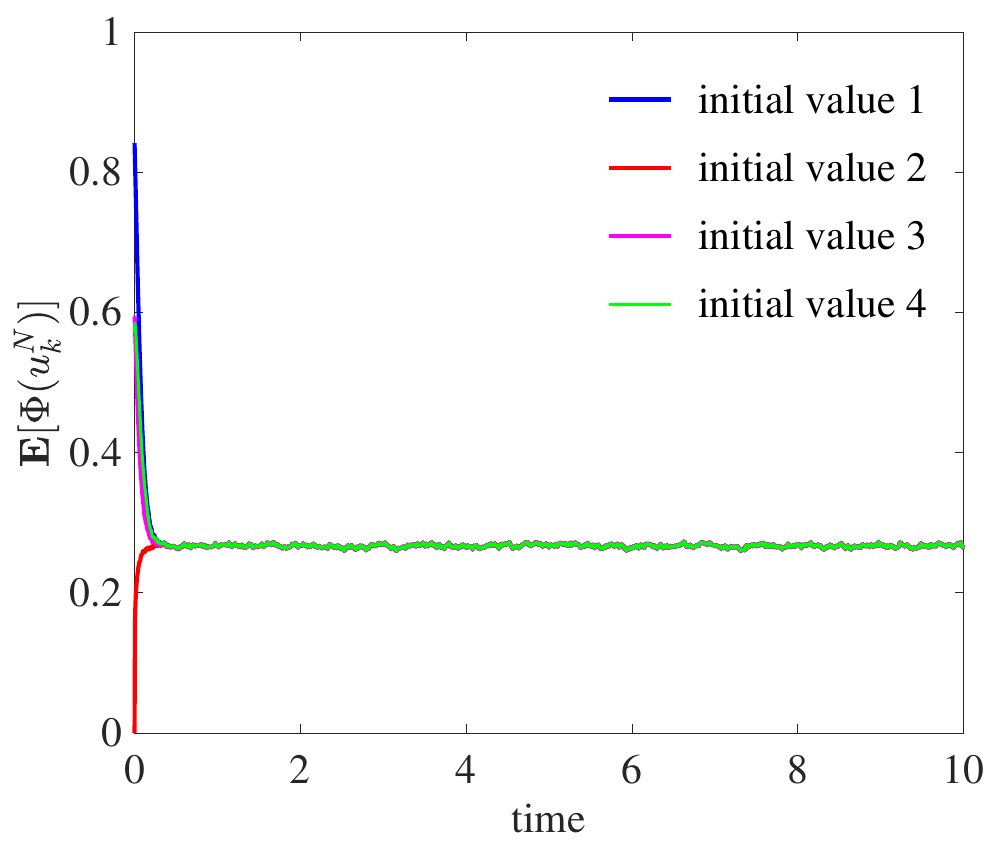}
		\includegraphics[scale = 0.27]{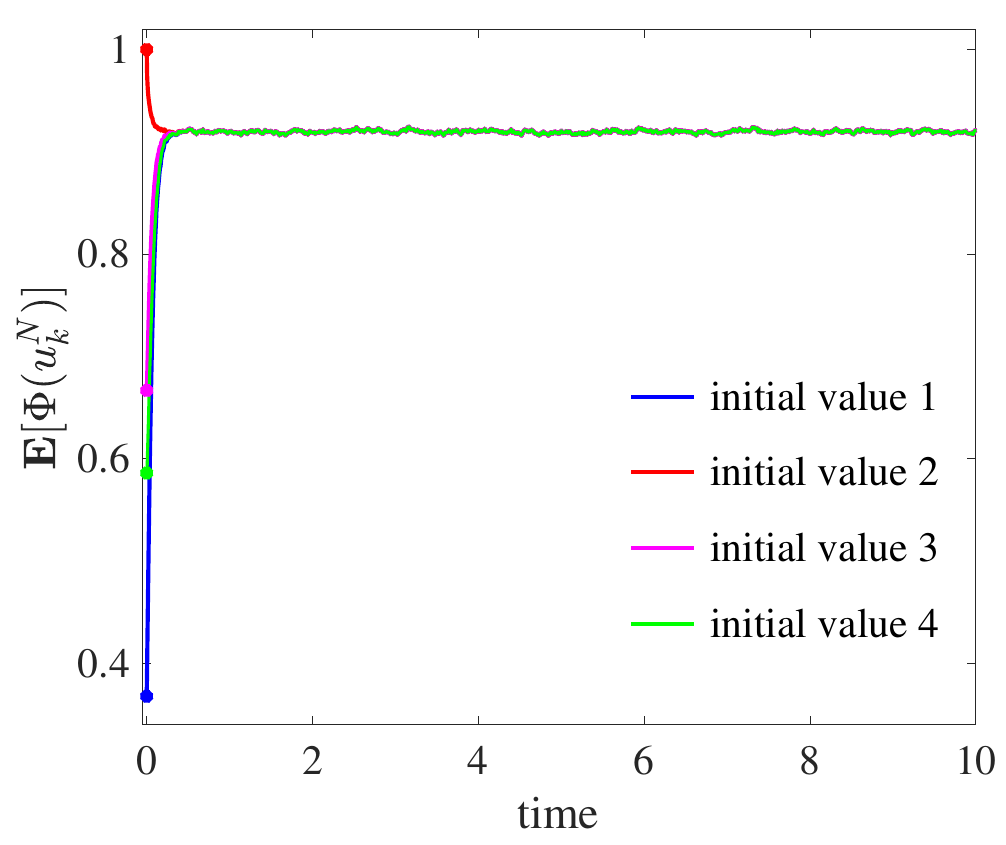}
		\includegraphics[scale = 0.27]{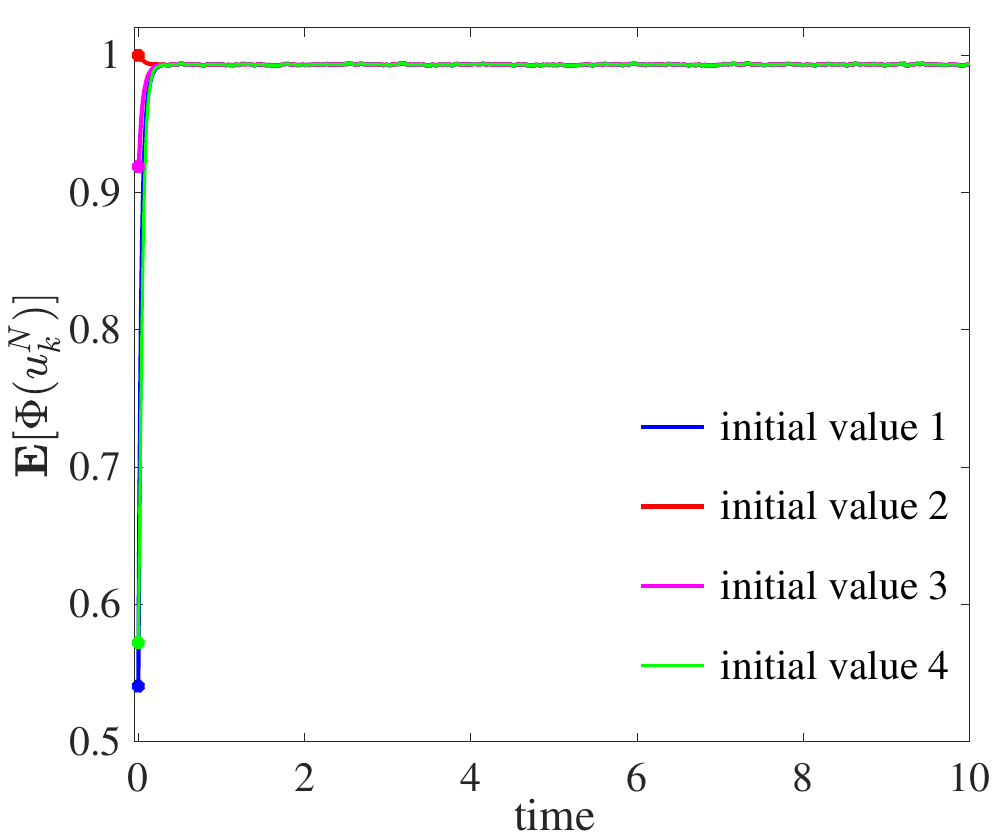}
	\caption{\centering Evolution of $\mathbb{E}[\Phi(u^N_k)]$ started from different initial values. Left: $\Phi(x)=\sin(\|x\|)$; Middle $\Phi(x)=\exp(-\|x\|^2)$; Right: $\Phi(x)=\cos(\|x\|)$ }\label{figure2}
	\end{center} 
\end{figure}

\section{Conclusion}\label{section:conclusion}
In this paper, we have proposed a novel explicit fully discrete scheme for the SACE, leveraging the spectral Galerkin method for spatial discretization and a tamed accelerated exponential integrator for temporal approximation. This approach addresses the computational challenges associated with implicit methods, enabling efficient long-time simulations and providing an explicit numerical approximation of the invariant measure. Through the use of time-independent moment bounds and Malliavin calculus, we have derived a uniform weak convergence analysis, demonstrating that the proposed scheme effectively approximates the invariant measure of the original problem.

The key innovation of this work lies in the introduction of a refined taming strategy, which guarantees the uniform-in-time moment boundedness of numerical solution (see Theorem \ref{Th:moment bound v}). To our knowledge, this is the first work to establish such a result for an explicit fully discrete method applied to the SACE. The weak error estimates provided in this paper further confirm the accuracy and efficiency of the proposed method, making it a promising tool for explicitly approximating invariant measures in SPDEs.

Future research will investigate the ergodicity of the numerical invariant measure and further quantify the approximation error between the numerical and exact invariant measures. Additionally, this study provides a foundation for extending these techniques to other nonlinear SPDEs, thereby contributing to a deeper understanding and more accurate numerical analysis of the long-term behavior of stochastic phase field models and related equations.

%
%
%
%

%

\begin{appendices}

\section{Proof of Lemma \ref{lem:lem3.2}}\label{Appendix perturbed} 

Obviously, $\|z\|_{\mathbb{L}_{a}^p} \leq \|z\|_{\mathbb{L}_{b}^p}$ holds for $b \leq a$. 
In addition, for $a \in (0,1)$ and $p \geq 2$, a straightforward calculation gives
\begin{equation}\label{Est:singular w LL}
	\begin{aligned}
		&\int_{0}^{t} (t-s)^{-a} e^{-\frac{\lambda_{1}(t-s)}{4}} \|z\|_{\mathbb{L}^p_{\lambda_{1}/4}(\mathcal{I}\times [0,s])}^p  ds
		\leq \int_{0}^{t} (t-s)^{-a} e^{-\frac{\lambda_{1}(t-s)}{4}} \|z\|_{\mathbb{L}^p_{\lambda_{1}/8}(\mathcal{I}\times [0,s])}^p  ds \\
		&\leq \left( \int_{0}^{t} (t-s)^{-a} e^{-\frac{\lambda_{1}(t-s)}{8}} ds \right) 
		\left( \int_{0}^{t} e^{-\frac{\lambda_{1}(t-r)}{8}} \|z(r)\|_{L^p}^p dr \right) \\
		&\leq  (\tfrac{8}{\lambda_{1}})^{1-a} \Gamma(1-a)  \|z\|_{\mathbb{L}^p_{\lambda_{1}/8}(\mathcal{I}\times [0,t])}^p.  
	\end{aligned}
\end{equation}
\begin{proof}[Proof of Lemma \ref{lem:lem3.2}.]

Since $w(0) = 0$, \eqref{Est:w L8} is obviously true when $t=0$. In the following we focus on the case of $t>0$. 
	Through the integration by parts, we have 
	\begin{equation}\label{perturbed 2}
			\frac{1}{2} \frac{d}{dt} \left(e^{\varrho t} \|w\|^2\right) 
			= \frac{1}{2} \left( e^{\varrho t} \frac{d}{dt} \|w\|^2 + \varrho e^{\varrho t} \|w\|^2 \right) 
			= e^{\varrho t} \left( -\|w\|_{\dot{H}^1}^2 + \langle w , F(w+z) \rangle + \frac{\varrho}{2} \|w\|^2 \right). 
	\end{equation}
	It is easy to compute that for $a_3>0$, 
	\begin{align*}
		\langle w , F(w+z) \rangle &\leq -a_3\|w\|_{L^4}^4 + 3a_3\|w\|_{L^4}^3\|z\|_{L^4} + a_3\|w\|_{L^4}\|z\|_{L^4}^3 + |a_2|\|w\|_{L^4}^3 + 2|a_2|\|w\|_{L^4}^2\|z\|_{L^4} \\
		&\quad + |a_2|\|w\|_{L^4}\|z\|_{L^4}^2 + |a_1|\|w\|_{L^4}^2 + |a_1|\|w\|_{L^4}\|z\|_{L^4} + |a_0|\|w\|_{L^4}. 
	\end{align*}
	Substituting the above estimate into \eqref{perturbed 2}, and denoting $\mathbb{G}_{t,\varrho} := \frac{1}{\varrho}(1-e^{-\varrho t})$, then by H\"older's inequality we get 
	\begin{equation}\label{perturbed 3}
		\begin{aligned}
			\tfrac{1}{2} \|w(t)\|^2 
			&\leq -a_3\|w\|_{\mathbb{L}_{\varrho}^4}^4  + 3a_3\|w\|_{\mathbb{L}_{\varrho}^4}^3\|z\|_{\mathbb{L}_{\varrho}^4} 
			+ a_3\|w\|_{\mathbb{L}_{\varrho}^4}\|z\|_{\mathbb{L}_{\varrho}^4}^3 
			+ |a_2|\|w\|_{\mathbb{L}_{\varrho}^4}^3 \mathbb{G}_{t,\varrho}^{\frac{1}{4}} \\ 
			& \quad + 2|a_2|\|w\|_{\mathbb{L}_{\varrho}^4}^2\|z\|_{\mathbb{L}_{\varrho}^4} \mathbb{G}_{t,\varrho}^{\frac{1}{4}}
			+ |a_2|\|w\|_{\mathbb{L}_{\varrho}^4}\|z\|_{\mathbb{L}_{\varrho}^4}^2 \mathbb{G}_{t,\varrho}^{\frac{1}{4}} 
			+ |a_1|\|w\|_{\mathbb{L}_{\varrho}^4}^2 \mathbb{G}_{t,\varrho}^{\frac{1}{2}} \\
			& \quad + |a_1|\|w\|_{\mathbb{L}_{\varrho}^4}\|z\|_{\mathbb{L}_{\varrho}^4} \mathbb{G}_{t,\varrho}^{\frac{1}{2}} 
			+ |a_0|\|w\|_{\mathbb{L}_{\varrho}^4} \mathbb{G}_{t,\varrho}^{\frac{3}{4}} 
			+ \tfrac{\varrho}{2} \|w\|_{\mathbb{L}_{\varrho}^4}^2 \mathbb{G}_{t,\varrho}^{\frac{1}{2}}. 
		\end{aligned}
	\end{equation}
	For any fixed $t>0$, we claim that 
	\begin{align*}
		\|w\|_{\mathbb{L}_{\varrho}^4(\mathcal{I}\times [0,t])} \leq 5 \|z\|_{\mathbb{L}_{\varrho}^4(\mathcal{I}\times [0,t])} \quad \text{or} \quad \|w\|_{\mathbb{L}_{\varrho}^4(\mathcal{I}\times [0,t])} \leq 5 \mathbb{G}_{t,\varrho}^{\frac{1}{4}} \max\big\{ |\tfrac{a_2}{a_3}| , |\tfrac{a_1}{a_3}|^{\frac{1}{2}} , |\tfrac{a_0}{a_3}|^{\frac{1}{3}} , |\tfrac{\varrho}{a_3}|^{\frac{1}{2}} \big\}. 
	\end{align*}
	If this claim is false, namely, there exists some $t>0$, such that 
	\begin{equation*}
		\|z\|_{\mathbb{L}_{\varrho}^4(\mathcal{I}\times [0,t])} 
		< \frac{1}{5} \|w\|_{\mathbb{L}_{\varrho}^4(\mathcal{I}\times [0,t])}
		\quad \text{and} \quad  \mathbb{G}_{t,\varrho}^{\frac{1}{4}} < \frac{1}{5} \|w\|_{\mathbb{L}_{\varrho}^4(\mathcal{I}\times [0,t])} 
		\min\big\{ |\tfrac{a_3}{a_2}| , |\tfrac{a_3}{a_1}|^{\frac{1}{2}} , |\tfrac{a_3}{a_0}|^{\frac{1}{3}} , |\tfrac{a_3}{\varrho}|^{\frac{1}{2}} \big\}, 
	\end{equation*}
	then by noting that $a_3>0$, it can be deduced from \eqref{perturbed 3} that
	\begin{align*}
		0 \leq \tfrac{1}{2} \|w(t)\|^2 
		&\leq \left(-a_3+\tfrac{3a_3}{5}+\tfrac{a_3}{125} 
		+ \tfrac{a_3}{5} + \tfrac{2a_3}{25} + \tfrac{a_3}{125} 
		+ \tfrac{a_3}{25} + \tfrac{a_3}{125} + \tfrac{a_3}{50} 
		+ \tfrac{a_3}{125} \right)
		\|w\|_{\mathbb{L}_{\varrho}^4}^4 \\
		&= -\tfrac{7a_3}{250} \|w\|_{\mathbb{L}_{\varrho}^4}^4 < 0, 
	\end{align*}
	which leads a contradiction. Thus it holds that for any $t\geq0$, 
	\begin{equation}\label{Est:w LL4}
		\|w\|_{\mathbb{L}_{\varrho}^4(\mathcal{I}\times [0,t])} 
		\leq 5 \|z\|_{\mathbb{L}_{\varrho}^4(\mathcal{I}\times [0,t])} + 5 \mathbb{G}_{t,\varrho}^{\frac{1}{4}} \max\big\{ |\tfrac{a_2}{a_3}| , |\tfrac{a_1}{a_3}|^{\frac{1}{2}} , |\tfrac{a_0}{a_3}|^{\frac{1}{3}} , |\tfrac{\varrho}{a_3}|^{\frac{1}{2}} \big\}
		\leq C \big( \|z\|_{\mathbb{L}_{\varrho}^4(\mathcal{I}\times [0,t])} + 1 \big), 
	\end{equation}
	where the constant $C$ depends on $\varrho$ and $a_i$, $i=0, 1, 2, 3$, but is independent of $N$ and $t$. 
	
	From \eqref{perturbed 1}, by \eqref{Est:PN S4}, H\"older's inequality and \eqref{Est:w LL4}, we obtain  
	\begin{equation}\label{Est:w L3}
		\begin{aligned}
			&\|w(t)\|_{L^3} 
			\leq C \int_{0}^{t} (t-s)^{-\frac{5}{24}} e^{-\frac{\lambda_{1}(t-s)}{4}} \| F(w(s) + z(s)) \|_{L^{\frac{4}{3}}} ds \\ 
			&\leq C \left(\int_{0}^{t} (t-s)^{-\frac{5}{6}} e^{-\frac{\lambda_{1}(t-s)}{4}} ds \right)^{\frac{1}{4}} 
			\left(\int_{0}^{t} e^{-\frac{\lambda_{1}(t-s)}{4}} \left( \|w(s)\|_{L^{4}}^4 + \|z(s)\|_{L^{4}}^4 + 1 \right) ds\right)^{\frac{3}{4}} \\ 
			&\leq C \big( \|w\|_{\mathbb{L}_{\lambda_{1}/4}^4(\mathcal{I}\times [0,t])}^3 + \|z\|_{\mathbb{L}_{\lambda_{1}/4}^4(\mathcal{I}\times [0,t])}^3 + 1 \big)
			\leq C \big( \|z\|_{\mathbb{L}_{\lambda_{1}/4}^4(\mathcal{I}\times [0,t])}^3 + 1 \big). 
		\end{aligned}
	\end{equation}
	Finally, by \eqref{Est:PN S3}, \eqref{Est:w L3}, H\"older's inequality, \eqref{Est:singular w LL} with $a=\frac{1}{2}$ and $p=9$, and  $\|z\|_{\mathbb{L}_{\varrho}^q} \leq \|z\|_{\mathbb{L}_{\varrho}^p}$ for $p \geq q$, we have 
	\begin{align*}
		&\|w(t)\|_{L^\infty} 
		\leq C \int_{0}^{t} (t-s)^{-\frac{1}{2}} e^{-\frac{\lambda_{1}(t-s)}{4}} \left( \|w(s)\|_{L^3}^3 + \|z(s)\|_{L^3}^3 + 1 \right) ds \\
		&\leq C (\tfrac{4}{\lambda_{1}})^{\frac{1}{2}} \Gamma(\tfrac{1}{2})  
		+ C \int_{0}^{t} (t-s)^{-\frac{1}{2}} e^{-\frac{\lambda_{1}(t-s)}{4}} \|z\|_{\mathbb{L}_{\lambda_{1}/4}^4(\mathcal{I}\times [0,s])}^9 ds \\
		&\quad + C \left(\int_{0}^{t} (t-s)^{-\frac{3}{4}} e^{-\frac{\lambda_{1}(t-s)}{4}} ds\right)^{\frac{2}{3}} \left(\int_{0}^{t}  e^{-\frac{\lambda_{1}(t-s)}{4}} \|z(s)\|_{L^3}^9  ds\right)^{\frac{1}{3}} \\
		&\leq C \big(  \|z\|_{\mathbb{L}_{\lambda_{1}/8}^9(\mathcal{I}\times [0,t])}^9 + 1 \big), 
	\end{align*}
	which completes the proof. 
\end{proof}

\end{appendices}

\bibliographystyle{plain}

\end{document}